\documentclass[fontsize=12pt,a4paper,headings=normal,
twoside=false,leqno,parskip=half-,abstract=true]{scrartcl}
\usepackage[english]{babel}
\usepackage{booktabs}

\usepackage[utf8]{inputenc}
\usepackage{hyperref}
\hypersetup{
 pdftitle={},
 pdfauthor={},
 colorlinks=true,
 linkcolor=blue,
 citecolor=blue,
 filecolor=blue,
 urlcolor=blue}

\usepackage[pagewise]{lineno}%\linenumbers
\usepackage[version=4]{mhchem}
\usepackage{mathtools} 
\usepackage[format=plain,labelfont=bf,font=small]{caption}
\usepackage{subfigure}%replaced by subcaption
\usepackage{xcolor}
\usepackage[arrow, matrix, curve]{xy}
\usepackage{tikz}
\usepackage{float}
\usepackage{orcidlink}

\usepackage{academicons}
\definecolor{orcidlogocol}{HTML}{A6CE39}
\usepackage{tikz-cd}
\usepackage{caption}
\usepackage{amsmath,amsthm}
\usepackage{amssymb} 
\usepackage[normalem]{ulem}
\usepackage{enumitem}
\usepackage[normalem]{ulem}
\usepackage[version=4]{mhchem}

\newtheorem{theorem}{Theorem}[section]
\newtheorem{conjecture}{Conjecture}[section]
\newtheorem{lemma}[theorem]{Lemma}
\newtheorem{cor}[theorem]{Corollary}
\newtheorem{prop}[theorem]{Proposition}

\theoremstyle{definition}

\theoremstyle{remark}
\newtheorem{remark}[theorem]{Remark}

\numberwithin{equation}{section}

\DeclareMathAlphabet{\mathpzc}{OT1}{pzc}{m}{it}

\newcommand{\be}{\begin{equation}}
\newcommand{\ee}{\end{equation}}
\newcommand{\bea}{\begin{eqnarray}}
\newcommand{\eea}{\end{eqnarray}}

\title{\Large Sequential and distributive dual futile cycle:\\ 
Hopf bifurcation can occur under parameter-rich kinetics \\ 
but cannot occur under mass action kinetics}
\author{\large Nicola Vassena \orcidlink{0000-0001-5411-4976} \thanks
{Universität Leipzig, Germany, \texttt{nicola.vassena@uni-leipzig.de}}
 }
\date{\large \today}

\graphicspath{{./images/} }

\begin{document}

\maketitle

\vspace{-0.5cm}

\begin{abstract}
This paper establishes that the system of ordinary differential equations arising from the sequential and distributive dual futile cycle has the structural capacity for Hopf bifurcations, whenever it is endowed with general parameter-rich kinetics, but it loses such capacity if it is endowed with mass action kinetics.  The proof of the latter fact relies on a Routh--Hurwitz approach, improved by few preliminary structural considerations, but a decisive contribution came from \texttt{GPT-5.6 Sol}, which provided a nontrivial positivity certificate. A second purpose of the paper is indeed to document such AI-contribution, exploiting this well-studied model, and to offer an example of the utilizability of such tools in the context of computer algebra and positivity certificates for large polynomials.
\end{abstract}

\tableofcontents

\section{Introduction}
The sequential and distributive dual futile cycle,
\begin{equation}\label{double}
\begin{alignedat}{3}
\ce{A}+\ce{E}
&\;\overset{1}{\underset{2}{\rightleftharpoons}}\;
\ce{U}\;
\overset{3}{\longrightarrow}\;
&\ce{B}+\ce{E}
&\;\overset{4}{\underset{5}{\rightleftharpoons}}\;
\ce{V}\;
\overset{6}{\longrightarrow}\;
&\ce{C}+\ce{E},
\\[2mm]
\ce{A}+\ce{F}
&\;\underset{12}{\longleftarrow}\;
\ce{Y}\;
\overset{11}{\underset{10}{\rightleftharpoons}}\;
&\ce{B}+\ce{F}
&\;\underset{9}{\longleftarrow}\;
\ce{X}\;
\overset{8}{\underset{7}{\rightleftharpoons}}\;
&\ce{C}+\ce{F}.
\end{alignedat}
\end{equation}
 is a basic model of multi-site phosphorylation, a mechanism central to cellular signaling \cite{cohen2000regulation, gunawardena:2007, ThomsonGunawardena:09}. Species $\{\ce{A},\ce{B},\ce{C}\}$ represent substrates, species $\{\ce{E},\ce{F}\}$ represent enzymes, and species $\{\ce{U},\ce{V},\ce{X},\ce{Y}\}$ represent intermediates. In addition to its modeling relevance in biochemistry, the mathematical interest in such a deceptively simple model resides in the fact that the system is extremely amenable to mathematical testing in the field of chemical reaction network theory \cite{WangSontag:2008, WangSontagMonotone:2008,AliciaetalToric:2012,HolsteinFlockerziConradi:2013,FlockerziHolsteinConradi:2014,ConradiMincheva:2014,HellRendall:2015,ErramiEtAl:2015,ConradiFeliuMinchevaWiuf:2017,BozemanMorales:2017,Tung:2018,Feliu:2020,TorresFeliu21,conradi2024distributive,CaiHimmelmannOstermann:2025}.

The absence of Hopf bifurcations in the associated mass-action system, or more generally the impossibility for its Jacobian to possess a pair of purely-imaginary eigenvalues $\pm i\omega$ with $\omega>0$, has appeared in the community both on-stage \cite{Dickenstein2019, conradietal19,CarstenHopfExclusion19, Dickenstein20, EliNidhiExc:2024, Conradispatial} and off-stage, with many public and private discussions underscoring the interest to settle this technical fact. This is the content of this paper.

To introduce the problem in more detail, let us first assume that each reaction rate is a positive function that only depends on the concentrations of its reactants and has strictly positive partial derivatives throughout the positive orthant.
The reaction network yields the following system of ordinary differential equations for the concentrations:
\begin{align} \label{doubleeq}
\begin{cases}
[\dot{\ce{A}}]=-r_{1}([\ce{A}],[\ce{E}])+r_2([\ce{U}])+r_{12}([\ce{Y}]),\\
[\dot{\ce{B}}]=r_3([\ce{U}])-r_4([\ce{B}],[\ce{E}])+r_5([\ce{V}])+r_9([\ce{X}])-r_{10}([\ce{B}],[\ce{F}])+r_{11}([\ce{Y}]),\\
[\dot{\ce{C}}]=-r_7([\ce{C}],[\ce{F}])+r_6([\ce{V}])+r_8([\ce{X}]),\\
[\dot{\ce{E}}]=-r_{1}([\ce{A}],[\ce{E}])+r_2([\ce{U}])+r_3([\ce{U}])-r_4([\ce{B}],[\ce{E}])+r_5([\ce{V}])+r_6([\ce{V}]),\\
[\dot{\ce{F}}]=-r_7([\ce{C}],[\ce{F}])+r_8([\ce{X}])+r_9([\ce{X}])-r_{10}([\ce{B}],[\ce{F}])+r_{11}([\ce{Y}])+r_{12}([\ce{Y}]),\\
[\dot{\ce{U}}]=r_{1}([\ce{A}],[\ce{E}])-r_2([\ce{U}])-r_3([\ce{U}]),\\
[\dot{\ce{V}}]=r_4([\ce{B}],[\ce{E}])-r_5([\ce{V}])-r_6([\ce{V}]),\\
[\dot{\ce{X}}]=r_7([\ce{C}],[\ce{F}])-r_8([\ce{X}])-r_9([\ce{X}]),\\
[\dot{\ce{Y}}]=r_{10}([\ce{B}],[\ce{F}])-r_{11}([\ce{Y}])-r_{12}([\ce{Y}]).\\
\end{cases}
\end{align}
A Hopf bifurcation at a positive steady state requires the Jacobian of the system \eqref{doubleeq} to possess purely imaginary eigenvalues. Proceeding symbolically, the Jacobian reads:
{\footnotesize
\begin{equation*}
G=
\begin{pmatrix}
-a & 0 & 0 & -e_1 & 0 & u_2 & 0 & 0 & y_{12}\\
0 & -b_4-b_{10} & 0 & -e_4 & -f_{10} & u_3 & v_5 & x_9 & y_{11}\\
0 & 0 & -c & 0 & -f_7 & 0 & v_6 & x_8 & 0\\
-a & -b_4 & 0 & -e_1-e_4 & 0 & u_2+u_3 & v_5+v_6 & 0 & 0\\
0 & -b_{10} & -c & 0 & -f_7-f_{10} & 0 & 0 & x_8+x_9 & y_{11}+y_{12}\\
a & 0 & 0 & e_1 & 0 & -u_2-u_3 & 0 & 0 & 0\\
0 & b_4 & 0 & e_4 & 0 & 0 & -v_5-v_6 & 0 & 0\\
0 & 0 & c & 0 & f_7 & 0 & 0 & -x_8-x_9 & 0\\
0 & b_{10} & 0 & 0 & f_{10} & 0 & 0 & 0 & -y_{11}-y_{12}
\end{pmatrix}.
\end{equation*}}
\hspace{-0.25cm} The partial derivatives are displayed as letters for simplicity. The formal conversion, via $r_{jm}=\partial r_j / \partial [m]$,  reads: 
\begin{equation*}
\begin{alignedat}{4}
a      & = r_{1A}  &\qquad
b_4    & = r_{4B}  &\qquad
b_{10} & = r_{10B} &\qquad
c      & = r_{7C},  \\
e_1    & = r_{1E}  &
e_4    & = r_{4E}  &
f_7    & = r_{7F}  &
f_{10} & = r_{10F}, \\
u_2    & = r_{2U}  &
u_3    & = r_{3U}  &
v_5    & = r_{5V}  &
v_6    & = r_{6V}, \\
x_8    & = r_{8X}  &
x_9    & = r_{9X}  &
y_{11} & = r_{11Y} &
y_{12} & = r_{12Y}.
\end{alignedat}
\end{equation*}
For later reference, we name such parameters $\mathbf{p}$, i.e. 
\begin{equation*}
    \mathbf{p}=(\;a,\;b_4,\;b_{10},\;c,\;e_1,\;e_4,\;f_7,\;f_{10},\;u_2,\;u_3,\;v_5,\;v_6,\;x_8,\;x_9,\;y_{11},\;y_{12}\;).
\end{equation*}
Due to the monotonicity assumption on the reaction rates, we consider all 16 symbols strictly positive, i.e. $\mathbf{p}>0$. The absence of Hopf bifurcation would follow if $G$ does not admit purely imaginary eigenvalues for any choice of $\mathbf{p}$. However, the example in Sec.~\ref{sec:examplehopfpr} shows that such choice exists. Therefore, we restrict the attention to mass action kinetics, which yields the following ODE system,
\begin{equation}\label{doubleeqma}
\begin{cases}
[\dot{\ce{A}}]=-\kappa_1[\ce{A}][\ce{E}]+\kappa_2[\ce{U}]+\kappa_{12}[\ce{Y}],\\
[\dot{\ce{B}}]=\kappa_3[\ce{U}]-\kappa_4[\ce{B}][\ce{E}]
       +\kappa_5[\ce{V}]+\kappa_9[\ce{X}]-\kappa_{10}[\ce{B}][\ce{F}]+\kappa_{11}[\ce{Y}],\\
[\dot{\ce{C}}]=\kappa_6[\ce{V}]-\kappa_7[\ce{C}][\ce{F}]+\kappa_8[\ce{X}],\\
[\dot{\ce{E}}]=-\kappa_1[\ce{A}][\ce{E}]+(\kappa_2+\kappa_3)[\ce{U}]-\kappa_4[\ce{B}][\ce{E}]
       +(\kappa_5+\kappa_6)[\ce{V}],\\
[\dot{\ce{F}}]=-\kappa_7[\ce{C}][\ce{F}] +(\kappa_8+\kappa_9)[\ce{X}]-\kappa_{10}[\ce{B}][\ce{F}]+(\kappa_{11}+\kappa_{12})[\ce{Y}],\\
[\dot{\ce{U}}]=\kappa_1[\ce{A}][\ce{E}]-(\kappa_2+\kappa_3)[\ce{U}],\\
[\dot{\ce{V}}]=\kappa_4[\ce{B}][\ce{E}]-(\kappa_5+\kappa_6)[\ce{V}],\\
[\dot{\ce{X}}]=\kappa_7[\ce{C}][\ce{F}]-(\kappa_8+\kappa_9)[\ce{X}],\\
[\dot{\ce{Y}}]=\kappa_{10}[\ce{B}][\ce{F}]-(\kappa_{11}+\kappa_{12})[\ce{Y}].
\end{cases}
\end{equation}
where $(\kappa_1,...,\kappa_{12})\in\mathbb{R}^{12}_{>0}$ are strictly positive parameters.

This paper proves that the system \eqref{doubleeqma} does not admit Hopf bifurcations at a positive steady state for any choice of the parameters $(\kappa_1,...,\kappa_{12})>0$,  see Thm.~\ref{thm:main}. It is organized as follows. Sec.~\ref{sec:preliminaries} introduces some structural features of the model. Sec.~\ref{sec:examplehopfpr} presents an example of a choice of the parameters $\mathbf{p}$ for the symbolic Jacobian $G$ such that a Hopf bifurcation occurs under parameter-rich kinetics, e.g. Michaelis--Menten \cite{MM13}. We use here the setting of global Hopf bifurcation \cite{Fiedler85PhD, Blokhuis25}, which requires a change of stability of the Jacobian along a steady-state path in parameter space where the Jacobian remains invertible throughout. Based on this example, we state a weaker version of the conjecture, namely that Hopf bifurcation cannot occur from a stable steady state. In Sec.~\ref{sec:sufficient} we derive sufficient algebraic conditions for $G$ to be Hurwitz stable whenever $G$ does not possess a real-positive eigenvalue: such condition depends on the nonnegativity of an expression in the 16 symbols $\mathbf{p}$, which we term $\mathcal{T}$.
In Sec.~\ref{sec:ma} we turn to the mass action case \eqref{doubleeqma} and we prove that - under mass action steady-state constraints - $\mathcal{T}\ge0$. Since we also show that Hopf bifurcations for the mass action system cannot occur if the Jacobian has a real positive eigenvalue, the positivity of $\mathcal{T}$ excludes Hopf bifurcations for the mass action system. The attached \texttt{MATLAB} codes are presented in Sec.~\ref{sec:computations}. In Sec.~\ref{sec:discussionai}, I discuss the proof in more detail from the perspective of my interaction with an \texttt{LLM}. Sec.~\ref{sec:conclusion} concludes the paper with a final wrap-up and few remarks.

\paragraph{Disclaimer on AI contribution.} At each new \texttt{ChatGPT} issue, I have been testing the reasoning advancements by subjecting a few concise and elementary conjectures to the \texttt{LLM}. I own a \textsc{Plus} account, and have exclusively worked with the models therein provided. One of those conjectures is exactly whether the symbolic Jacobian matrix of the dual futile cycle admits purely imaginary eigenvalues for any choice of the positive parameters $\mathbf{p}$. Up to the issue \texttt{GPT-5.6 Sol}, I have always received negative answer - supported by numerical explorations only - but no conclusive proof. Testing this question on \texttt{GPT-5.6 Sol}, in turn, fastly produced the example presented in Sec.~\ref{sec:examplehopfpr}. On the other hand, past independent numerical explorations had already produced a high confidence that, under mass action kinetics, Hopf bifurcation could not occur. This prompted me to understand better the structure of the positive example with general kinetics: it naturally led me to the weaker conjecture \ref{conjecture} (still not proven) that Hopf bifurcation can only possibly occur from an unstable steady state with a real positive eigenvalue - a fact that would already settle the mass action case. Structural investigations of these properties led to Lemma \ref{lem:centralhurwitz} below, which introduces a Hurwitz-type quantity $\mathcal{T}$ whose positivity would settle the conjecture. Such symbolic manipulations have been aided by \texttt{ChatGPT}, but I regard these steps still decisively dependent on a human contribution. After having derived $\mathcal{T}$, I asked \texttt{ChatGPT} to prove its  positivity under mass action kinetics. This produced, in a single query and without further input from my side, a full positivity certificate. To check the computer algebra, I requested \texttt{MATLAB} codes, which I have independently checked. Finally, I wrote the main text without substantial help from an \texttt{LLM}.

\paragraph{Acknowledgments.} I am indebted to Carsten Conradi who first introduced me to this ear-worm problem, and to Khazhgali Kozhasov and Timo de Wolff for early discussions on it and on positivity certificates for polynomials. I further thank Jia-Yuan Dai, Bernold Fiedler, Phillipo Lappicy, Matteo Levi, Alejandro López Nieto, Hannes Stuke, and Stefano Vita for many discussions on doing mathematics with or without AI. This work has been supported by the MATOMIC consortium of the Novo Nordisk Foundation, grant NNF21OC0066551.

\section{Structural preliminaries}\label{sec:preliminaries}

We outline three features of the network \eqref{double}: (i) symmetry, (ii) conserved quantities, and (iii) its interpretation as a quadratic eigenvalue problem.

\paragraph{The $\mathbb{Z}_2$-symmetry $\sigma$.} The network \eqref{double} admits a nontrivial graph automorphism
$\sigma$, which generates a group isomorphic to $\mathbb{Z}_2$. We explicitly record here - for later use - the orbits of such automorphism, at the level of species, reactions, and partial derivatives.
\begin{equation}\label{eq:sigma}\tag{$\sigma$}
    \begin{split}
        \ce{A}\leftrightarrow \ce{C},\qquad
\ce{E}\leftrightarrow \ce{F},\qquad
&\ce{U}\leftrightarrow \ce{X},\qquad
\ce{V}\leftrightarrow \ce{Y},\qquad
\ce{B}\leftrightarrow \ce{B},\\
1\leftrightarrow7,\quad
2\leftrightarrow8,\quad
3\leftrightarrow9&,\quad
4\leftrightarrow10,\quad
5\leftrightarrow11,\quad
6\leftrightarrow12,\\
a\leftrightarrow c,\quad
e_1\leftrightarrow f_7,\quad
u_2\leftrightarrow x_8,\quad
u_3\leftrightarrow x_9,&\quad
b_4\leftrightarrow b_{10},\quad
e_4\leftrightarrow f_{10},\quad
v_5\leftrightarrow y_{11},\quad
v_6\leftrightarrow y_{12}.
\end{split}
\end{equation}
This structural symmetry does not make the system of ODEs $\mathbb{Z}_2$-equivariant \cite{GolubitskySymmBook}, since we do not assume $r_i([m])=r_{\sigma(i)}(\sigma([m]))$. Still, it informally indicates that structural features either come in $\sigma$-related pairs, or they are fixed by $\sigma$.

\paragraph{The three conserved quantities.} Via direct inspection of \eqref{doubleeq}, we note the presence of three conserved quantities, two related by the structural symmetry $\sigma$, i.e.
\begin{equation}\label{eq:conservedquantities1}
    \begin{cases}
    0= [\dot{\ce{E}}]+[\dot{\ce{U}}]+[\dot{\ce{V}}]\qquad\qquad &\Rightarrow \qquad\qquad [\ce{E}]+[\ce{U}]+[\ce{V}]=\mathcal{W}_1\\
0 = [\dot{\ce{F}}]+[\dot{\ce{X}}]+[\dot{\ce{Y}}]\qquad\qquad &\Rightarrow \qquad\qquad [\ce{F}]+[\ce{X}]+[\ce{Y}]=\mathcal{W}_2
\end{cases}
\end{equation}
and one fixed by $\sigma$, i.e.
\begin{equation}\label{eq:conservedquantities2}
        0 =[\dot{\ce{A}}]+ [\dot{\ce{B}}]+ [\dot{\ce{C}}]+ [\dot{\ce{U}}]+ [\dot{\ce{V}}]+[\dot{\ce{X}}]+[\dot{\ce{Y}}] \;\;\Rightarrow  \;\;  [\ce{A}]+ [\ce{B}]+  [\ce{C}]+  [\ce{U}]+ [\ce{V}]+ [\ce{X}]+ [\ce{Y}]=\mathcal{W}_3
\end{equation}
where $\mathcal{W}_1,\mathcal{W}_2,\mathcal{W}_3>0$ are positive constants.  Due to \eqref{eq:conservedquantities1} and \eqref{eq:conservedquantities2}, we know a priori that the characteristic polynomial of $G$ is of the form: 
\begin{equation}\label{eq:charpolyG}
g(\lambda):=\det(\lambda \operatorname{Id} -G)=(\mathfrak{c}_0\lambda^6+\mathfrak{c}_1 \lambda^5+ \mathfrak{c}_2 \lambda^4+\mathfrak{c}_3\lambda^3+\mathfrak{c}_4\lambda^2+\mathfrak{c}_5\lambda+\mathfrak{c}_6)\lambda^3,
\end{equation}
namely $\mathfrak{c}_7=\mathfrak{c}_8=\mathfrak{c}_9\equiv 0$ and it possesses at least three identically zero eigenvalues (we shall see that indeed all other coefficients are non identically zero). Even if $\mathfrak{c}_0=1$, we keep track of this coefficient, for later rescaling reasons. We use the notation $g_{\mathrm{red}}(\lambda)$ for the characteristic polynomial $g(\lambda)$ restricted to a stoichiometric compatibility class:
\begin{equation}
g_{\mathrm{red}}(\lambda):=\mathfrak{c}_0\lambda^6+\mathfrak{c}_1 \lambda^5+ \mathfrak{c}_2 \lambda^4+\mathfrak{c}_3\lambda^3+\mathfrak{c}_4\lambda^2+\mathfrak{c}_5\lambda+\mathfrak{c}_6.
\end{equation}
Consistently, it is possible to interpret $g_{\mathrm{red}}$ as the actual characteristic polynomial of a Jacobian matrix $G_{\mathrm{red}}$ of the system reduced to a stoichiometric compatibility class \cite{Fei19}. To show this, we implement two steps: first we eliminate  the equations for $[\ce{E}]$, $[\ce{F}]$, $[\ce{B}]$ by
\begin{equation}\label{eq:conservedqrem}
\begin{split} [\ce{E}]&=\mathcal{W}_1-[\ce{U}]-[\ce{V}]\\
[\ce{F}]&=\mathcal{W}_2-[\ce{X}]-[\ce{Y}]\\
  [\ce{B}]&=\mathcal{W}_3-[\ce{A}]-[\ce{C}]-[\ce{U}]-[\ce{V}]-[\ce{X}]-[\ce{Y}]
  \end{split}
\end{equation}
Second, we change variables, linearly substituting $\ce{A}$ and $\ce{C}$ with, respectively,
\begin{equation}\label{Qchange}
    [\ce{Q}_1]=[\ce{A}]+[\ce{U}]\qquad\qquad\qquad [\ce{Q}_2]=[\ce{C}]+[\ce{X}],
\end{equation}
Leveraging \eqref{eq:conservedqrem} and \eqref{Qchange}, the system restricted to $([\ce{Q_1}],[\ce{Q_2}],[\ce{U}],[\ce{V}],[\ce{Y}],[\ce{X}])$ reads:
\begin{equation*}
\begin{cases}
    [\dot{\ce{Q}}_1]=-r_3([\ce{U}])+r_{12}([\ce{Y}])\;\\
    [\dot{\ce{Q}}_2]=r_6([\ce{V}])-r_9([\ce{X}])\;\\
    [\dot{\ce{U}}]=r_{1}([\ce{Q_1}]-[\ce{U}],\mathcal{W}_1-[\ce{U}]-[\ce{V}])-r_2([\ce{U}])-r_3([\ce{U}])\;\\
[\dot{\ce{V}}]=r_4(\mathcal{W}_3-[\ce{Q_1}]-[\ce{Q_2}]-[\ce{V}]-[\ce{Y}],\mathcal{W}_1-[\ce{U}]-[\ce{V}])-r_5([\ce{V}])-r_6([\ce{V}])\;\\
[\dot{\ce{Y}}]=r_{10}(\mathcal{W}_3-[\ce{Q_1}]-[\ce{Q_2}]-[\ce{V}]-[\ce{Y}],\mathcal{W}_2-[\ce{X}]-[\ce{Y}])-r_{11}([\ce{Y}])-r_{12}([\ce{Y}])\;\\
[\dot{\ce{X}}]=r_7([\ce{Q_2}]-[\ce{X}],\mathcal{W}_2-[\ce{X}]-[\ce{Y}])-r_8([\ce{X}])-r_9([\ce{X}])\;\\
\end{cases}.
\end{equation*}
The placement of $\ce{Y}$ before $\ce{X}$ here is intentional to expose a clearer structure, as we will see below. Its Jacobian $G_{\mathrm{red}}$, via chain rule, is written as
{\footnotesize \begin{equation*}
G_{\mathrm{red}}=
    \begin{pmatrix}
        0 & 0 & -u_3 & 0 & y_{12} & 0\\
        0 & 0 & 0 & v_6 & 0 &-x_9\\
        a & 0 & -a-e_1-u_2-u_3 & -e_1 & 0 & 0\\
        -b_4 & -b_4 & -e_4 & -b_4-e_4-v_5-v_6 & -b_4 & 0\\
        -b_{10} & -b_{10} & 0 & -b_{10} & -b_{10}-f_{10}-y_{11}-y_{12} & -f_{10}\\
        0 & c & 0 & 0& -f_7 & -c-f_7-x_8-x_9
    \end{pmatrix}.
\end{equation*}}
\hspace{-0.25cm} The polynomial $g_{\mathrm{red}}$ is indeed the characteristic polynomial of the reduced Jacobian $G_{\mathrm{red}}$. The change of variables from $([\ce{A}],[\ce{C}])$ to $([\ce{Q}_1],[\ce{Q}_2])$ is particularly convenient to expose a relevant structure:  we split $G_{\mathrm{red}}$ into blocks as follows:
\begin{equation}
    G_{\mathrm{red}}=\begin{pmatrix}
        \mathbf{0}_2 & R\\
        L & -N
    \end{pmatrix},
\end{equation}
with 
{\small \[
    \mathbf{0}_2=\begin{pmatrix} 0 & 0\\
        0 &0\end{pmatrix},\qquad\qquad\qquad R=\begin{pmatrix}
           -u_3 & 0 & y_{12} & 0\\
           0 & v_6 & 0 &-x_9\\
        \end{pmatrix}, \qquad\qquad\qquad
        L=\begin{pmatrix}
             a & 0\\
             -b_4 & -b_4\\
              -b_{10} & -b_{10} \\
               0 & c
        \end{pmatrix},\]}
        \[
        N=\begin{pmatrix}
a+e_1+u_2+u_3 & e_1 & 0 & 0\\
e_4 & b_4+e_4+v_5+v_6 & b_4 & 0\\
0 & b_{10} & b_{10}+f_{10}+y_{11}+y_{12} & f_{10}\\
0 & 0& f_7 & c+f_7+x_8+x_9
        \end{pmatrix}.
\]
The matrix $N$ is row-diagonally dominant with $N_{ii}>0$; thus all of the eigenvalues of $-N$ have negative real part. The choice of placing the equation for $\ce{Y}$ before the one for $\ce{X}$ makes this matrix tridiagonal.

\paragraph{The quadratic eigenvalue problem.} The structure described above plays a central role in this paper, especially for the mass-action case. We transform $N$ in a symmetric matrix via left multiplication by a $4\times 4$ positive diagonal matrix $D=\operatorname{diag}(\delta_1,\delta_2,\delta_3,\delta_4)$
defined as follows:
\begin{equation}D=\operatorname{diag}\bigg(\delta_1,\delta_2,\delta_3,\delta_4\bigg):=\operatorname{diag}\bigg(\dfrac{e_4}{e_1},1,\dfrac{b_4}{b_{10}},\dfrac{b_4f_{10}}{b_{10}f_{7}}\bigg).
\end{equation}
The ratios $(e_4/e_1,b_4/b_{10},f_{10}/f_7)$ express the relative magnitude of the contribution of $\ce{E}$, $\ce{B}$, $\ce{F}$ in reactions, respectively, $(4, 1)$, $(4,10)$, and $(10,7)$. Arbitrarily, the matrix is normalized with respect to $D_{22}=\delta_2=1$
We may further expose the role of the diagonally-dominant symmetric matrix 
{\footnotesize\[DN= 
         \begin{pmatrix}
            \dfrac{e_4}{e_1} (a+e_1+u_2+u_3) & e_4 & 0 & 0\\
            e_4 & b_4+e_4+v_5+v_6& b_4 &0\\
             0 & b_4 & \dfrac{b_4}{b_{10}}(b_{10}+f_{10}+y_{11}+y_{12}) & \dfrac{b_4f_{10}}{b_{10}}\\
             0 & 0 & \dfrac{b_4f_{10}}{b_{10}} & \dfrac{b_4f_{10}}{b_{10}f_{7}}(c+f_7+x_8+x_9)
         \end{pmatrix}.\]}
         \hspace{-0.25cm} Multiplication of the reduced characteristic polynomial $g_{\mathrm{red}}(\lambda)$ by $\det D=e_4b_4^2f_{10}/e_1b_{10}^2f_{7}$ and working via Schur's complement, yields:
\[
\begin{split}
\frac{e_4b_4^2f_{10}}{e_1b_{10}^2f_{7}}\,g_{\mathrm{red}}(\lambda)
&=\det(D)\,\det(\lambda\operatorname{Id}_6-G_{\mathrm{red}})\\
&=
\det(D)\,
\det\begin{pmatrix}
\lambda\operatorname{Id}_2&-R\\
-L&\lambda\operatorname{Id}_4+N
\end{pmatrix}\\
&=
\det(D)\,\lambda^2
\det\left[
(\lambda\operatorname{Id}_4+N)
-\frac{1}{\lambda}LR
\right]\\
&=
\lambda^2
\det\left[
\lambda D+ DN
-\frac{1}{\lambda} DLR
\right]\\
&=
\lambda^{-2}
\det\left[
\lambda^2 D
+\lambda DN
-DLR
\right].
\end{split}
\]
Equivalently,
\begin{equation}\label{eq:qep}
    \lambda^2\frac{e_4b_4^2f_{10}}{e_1b_{10}^2f_{7}} \,g_{\mathrm{red}}=\det (
\lambda^2 D
+\lambda DN
- DLR),
\end{equation}
where
     \begin{equation}
         DL=\begin{pmatrix}
            \dfrac{e_4}{e_1}a & 0\\
            -b_4 & -b_4\\
            -b_4 & -b_4\\
            0 & \dfrac{b_4f_{10}}{b_{10}f_{7}}c
         \end{pmatrix}.
     \end{equation}
This transformation views the characteristic polynomial $g_{\mathrm{red}}(\lambda)$ of the system as a quadratic eigenvalue problem \cite{Tisseur:01}. This would be true even without the diagonal rescale $D$. However, since both $D$ and $DN$ are Hermitians and positive definite, such construction identifies the rank-2 coupling $-DLR$ as the only possible culprit of any instability. Indeed: if $-DLR$ were Hermitian and positive semidefinite, then all eigenvalues $\lambda$ would satisfy $\Re (\lambda) \le0$.

%\hline

\section{An example of a Hopf bifurcation and a weaker conjecture}\label{sec:examplehopfpr}

The structure of $G$ itself allows for purely-imaginary crossing, although for a very inhomogeneous choice of parameters. Specifically, we can confirm this as follows. 

Fix $a=b_4=c=f_7=f_{10}=e_1=u_2=u_3=v_5=x_8=x_9=y_{11}=y_{12}=1$, $b_{10}=0.01$, $e_4=4\cdot10^4$, and let $v_6$ be a free parameter. The evaluated matrix $\overline{G}_{\mathrm{red}}$ gives
\begin{equation}\label{Jacobianexample}
\overline{G}_{\mathrm{red}}(v_6)=
\begin{pmatrix}
0 & 0 & -1 & 0 & 1 & 0\\
0 & 0 & 0 & v_6 & 0 & -1\\
1 & 0 & -4 & -1 & 0 & 0\\
-1 & -1 & -40000 & -(40002+v_6) & -1 & 0\\
-0.01 & -0.01 & 0 & -0.01 & -3.01 & -1\\
0 & 1 & 0 & 0 & -1 & -4
\end{pmatrix},
\end{equation}
whose determinant reads:
$$\det \overline{G}_{\mathrm{red}}(v_6)=\dfrac{6080453}{50} - \dfrac{2079547\;v_6}{50}\approx 121609-41591 \;v_6,$$
from which we can conclude that $\det \overline{G}_{\mathrm{red}}(v_6)$ is monotonically decreasing w.r.t. $v_6$, and a zero occurs somewhere around $v_6\approx2.92$. Due to monotonicity, in particular, on the interval e.g. $v_6\in[5,40000]$ we have 
$$\det \overline{G}_{\mathrm{red}}(v_6)<0,$$
and thus $\overline{G}_{\mathrm{red}}$ is invertible on this interval. However, explicit eigenvalues computation shows that at the endpoints we have 
\begin{equation}
    \begin{cases}
       \operatorname{eig}( \overline{G}_{\mathrm{red}}(5))\approx (\mathbf{+8.52 \cdot 10^{-2}}, -1.28 \pm \mathrm{i}\; 0.44, -3.15,-4.38, -4 \cdot 10^{4})\,,\\
        \operatorname{eig}(\overline{G}_{\mathrm{red}}(40000))\approx(\mathbf{+5.32}, \mathbf{+0.01 \pm \mathrm{i}6.84}, -8.18\pm \mathrm{i}\,4.08,-8\cdot 10^4) \,.
    \end{cases}
\end{equation}
In particular, by intermediate value theorem, the invertibility of $\overline{G}_{\mathrm{red}}$ yields that there exists at least one $v_6^*\in (5,40000)$ such that $\overline{G}_{\mathrm{red}}(v_6^*)$ has purely-imaginary eigenvalues. If the system is endowed with \emph{parameter-rich kinetics} \cite{VasStad23}, as e.g. Michaelis--Menten \cite{MM13} or Generalized Mass Action kinetics \cite{Muller:12}, any choice of the sixteen partial-derivatives symbols is realizable at a steady state, by independent parametrization.
We can then leverage the theory of global Hopf bifurcation \cite{Fiedler85PhD}, via Recipe 0 in \cite{Blokhuis25}, which implies the existence of nonstationary periodic orbits for some values of $v_6$ for any analytic kinetics (including in particular Michaelis--Menten and Generalized Mass Action). Note the Recipe 0 is stated in \cite{Blokhuis25} for the main case of interest, namely a transition from a stable to an unstable steady state, whereas here the spectrum changes within the unstable reason. The underlying argument based on \cite{Fiedler85PhD} is unchanged. Without further local analysis, this global approach does not determine whether the Hopf bifurcation is nondegenerate, nor in particular whether it is subcritical or supercritical. Since the bifurcation occurs from an unstable steady state, the resulting periodic orbit will be unstable in any case, at least locally. We do not discuss this approach further, as it falls beyond the direct scope of this paper, and refer to \cite{Blokhuis25} for more details.

Independently from nonlinear dynamics, we have shown that purely imaginary eigenvalues of $G_{\mathrm{red}}$ (or equivalently of $G$) may exist for some choice of the unconstrained sixteen positive parameters. 
However, we observe that the bifurcation parameter $v_6$ triggers in an ordered (!) sequence a zero eigenvalue bifurcation and afterwards a purely-imaginary bifurcation. I conjecture that no Hopf bifurcation can occur before the zero-eigenvalue one. A bit more strongly, I conjecture the following:

\begin{conjecture}\label{conjecture}
    Assume $\operatorname{sign}\det G_\mathrm{red}>0$, then $G_\mathrm{red}$ is Hurwitz-stable.
\end{conjecture}

The conjecture would directly settle also the mass action case: $\operatorname{sign}\det G_\mathrm{red}>0$ is a necessary condition for the occurrence of purely imaginary eigenvalues under mass action kinetics as we will see in Sec.~\ref{sec:exclusion}.
Unfortunately, I have been unable to prove the above conjecture. In the next section, we still pursue this route as it eventually leads to a more favorable sufficient condition to exclude Hopf bifurcation under mass action kinetics.

\section{A sufficient condition for Hurwitz stability}\label{sec:sufficient}

Let $(\mathfrak{c}_0,\mathfrak{c}_1,...,\mathfrak{c}_6)$ indicate the nontrivial coefficients of the characteristic polynomial of the reduced symbolic Jacobian $G$
\begin{equation}
g_{\mathrm{red}}(\lambda):=\det(\lambda \operatorname{Id} -G_{\mathrm{red}})=\mathfrak{c}_0\lambda^6+\mathfrak{c}_1 \lambda^5+ \mathfrak{c}_2 \lambda^4+\mathfrak{c}_3\lambda^3+\mathfrak{c}_4\lambda^2+\mathfrak{c}_5\lambda+\mathfrak{c}_6
\end{equation}
Conjecture \ref{conjecture} assumes  that $\mathfrak{c}_6=\det G_{\mathrm{red}}>0$. We investigate this assumption further.

 Via the Cauchy--Binet formula, every coefficient $\mathfrak{c}_i$ is a multilinear homogeneous polynomial in the sixteen parameters of degree $i$.
Besides trivial $\mathfrak{c}_0=1$, an explicit computation of the coefficients show that $\mathfrak{c}_1,\mathfrak{c}_2,\mathfrak{c}_3,\mathfrak{c}_4>0$, i.e. all summands have positive coefficient, while $\mathfrak{c}_5$ and $\mathfrak{c}_6$ have summands with both positive and negative coefficients. To underline this, we introduce the notation:
\begin{equation}
    \begin{cases}
        \mathfrak{c}_5=P_5-N_5 \;\\
        \mathfrak{c}_6=P_6-N_6,\\
    \end{cases}
\end{equation}
where $P_5,N_5,P_6,N_6>0$ are multilinear homogenous (of degree 5 and 6) polynomials in the sixteen parameters of $G$. It is worthwhile make the negative terms explicit:
\begin{equation}\label{eq:N56}
    \begin{cases}
        N_5=v_6y_{12}(b_4cf_{10}+ab_{10}e_4)\\
        N_6=v_6y_{12}\Big((a+e_1+u_2+u_3) b_4cf_{10}+(c+f_7+x_8+x_9)ab_{10}e_4+ace_4f_{10}
\Big).
\\
    \end{cases}
\end{equation}
In particular either of the three different combinations $b_4cf_{10}v_6y_{12}$, $ab_{10}e_4v_6y_{12}$, (related by the symmetry $\sigma$) and $ace_4f_{10}v_6y_{12}$ (fixed by $\sigma)$ appears in all negative monomials of any coefficient of the characteristic polynomial. More specifically, the quadratic term $v_6y_{12}$ is present in all of the negative summands. We underline that $v_6$ and $y_{12}$ are indeed related by the symmetry $\sigma$.  These three terms structurally correspond to the \emph{unstable cores} introduced in \cite{VasStad23}.

A matrix is \emph{Hurwitz-stable} if all of its eigenvalues have negative real part, and it is \emph{Hurwitz-unstable} if at least one of its eigenvalues has positive real part. We may study the Hurwitz stability of a matrix via its associated Hurwitz matrix, which reads in our case:
\[
\mathcal{H}_6=
\begin{pmatrix}
\mathfrak{c}_1 & \mathfrak{c}_3 & \mathfrak{c}_5 & 0   & 0   & 0\\
\mathfrak{c}_0   & \mathfrak{c}_2 & \mathfrak{c}_4 & \mathfrak{c}_6 & 0   & 0\\
0   & \mathfrak{c}_1 & \mathfrak{c}_3 & \mathfrak{c}_5 & 0   & 0\\
0   & \mathfrak{c}_0   & \mathfrak{c}_2 & \mathfrak{c}_4 & \mathfrak{c}_6 & 0\\
0   & 0   & \mathfrak{c}_1 & \mathfrak{c}_3 & \mathfrak{c}_5 & 0\\
0   & 0   & \mathfrak{c}_0   & \mathfrak{c}_2 & \mathfrak{c}_4 & \mathfrak{c}_6
\end{pmatrix}.
\]
The Hurwitz determinants $\Delta_i$ are the leading principal minors of $\mathcal{H}_6$. Direct expansion yields:
\begin{equation}
\begin{cases}
\Delta_1=\mathfrak{c}_1\,;\\
\Delta_2=\mathfrak{c}_1\mathfrak{c}_2-\mathfrak{c}_0\mathfrak{c}_3\,;\\
\Delta_3=\mathfrak{c}_3\Delta_2-\mathfrak{c}_1^2\mathfrak{c}_4+\mathfrak{c}_0\mathfrak{c}_1\mathfrak{c}_5\,;\\
\Delta_4
=\mathfrak{c}_4(\Delta_3-\mathfrak{c}_0\mathfrak{c}_1\mathfrak{c}_5)
-\mathfrak{c}_5(\mathfrak{c}_2\Delta_2-2\mathfrak{c}_0\mathfrak{c}_1\mathfrak{c}_4)
-\mathfrak{c}_0^2\mathfrak{c}_5^2+\mathfrak{c}_1\mathfrak{c}_6\Delta_2\,;\\
\Delta_5
=\mathfrak{c}_5\Delta_4
-\mathfrak{c}_6(\mathfrak{c}_3\Delta_3
-\mathfrak{c}_1\mathfrak{c}_5\Delta_2
+\mathfrak{c}_1^3\mathfrak{c}_6)\,;\\
\Delta_6=\mathfrak{c}_6 \Delta_5\;.
\end{cases}
\end{equation}
Hurwitz-stability is characterized by $\Delta_i>0$ for $i=1,...,6$.
It is also well-known that nonzero purely-imaginary eigenvalues force $ \Delta_{n-1}= \Delta_5=0 $. This determinant corresponds, up to sign convention, to the determinant of the so-called \emph{second additive compound}, which is equal to the product of pairwise sums of the eigenvalues, see for an overview in reaction networks the preprint by Banaji \cite{banaji:2ndaddcomp}. This can also be recovered in general by standard resultant arguments \cite{householder:1968}.

Preliminarily, a universal algebraic relation on the Hurwitz determinants is needed, as stated in the next Proposition.

\begin{prop}\label{prop:Hdelta25}
Let $\Delta_2,\Delta_3,\Delta_4,\Delta_5$ be as introduced above. It holds
\begin{equation}\label{HurIddelta25}
\Delta_2\Delta_5
=
(\mathfrak{c}_5\Delta_2-\mathfrak{c}_1^2\mathfrak{c}_6)\Delta_4
-\mathfrak{c}_6\Delta_3^2,
\end{equation}
\end{prop}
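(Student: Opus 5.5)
The identity \eqref{HurIddelta25} is a purely algebraic statement in the indeterminates $\mathfrak{c}_0,\dots,\mathfrak{c}_6$. It uses nothing about the network, so I would prove it by direct substitution of the recursive expressions for $\Delta_2,\dots,\Delta_5$ listed above. Multiplying out all the Hurwitz minors blindly would work but is opaque. Instead I plan to peel off one recursion at a time, so that the claim collapses to the definition of $\Delta_2$.

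\emph{Step 1.} Insert $\Delta_5=\mathfrak{c}_5\Delta_4-\mathfrak{c}_6(\mathfrak{c}_3\Delta_3-\mathfrak{c}_1\mathfrak{c}_5\Delta_2+\mathfrak{c}_1^3\mathfrak{c}_6)$ into the left-hand side. The term $\mathfrak{c}_5\Delta_2\Delta_4$ then appears on both sides and cancels. Every remaining term carries a factor $\mathfrak{c}_6$, which we remove. (This is legitimate because we argue in the polynomial ring, so no division is really needed; equivalently one compares coefficients.) The claim becomes
\[
\mathfrak{c}_3\Delta_2\Delta_3-\mathfrak{c}_1\mathfrak{c}_5\Delta_2^2+\mathfrak{c}_1^3\mathfrak{c}_6\Delta_2=\mathfrak{c}_1^2\Delta_4+\Delta_3^2 .
\]

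\emph{Step 2.} Write one copy of $\Delta_3$ in $\Delta_3^2$ as $\mathfrak{c}_3\Delta_2-\mathfrak{c}_1^2\mathfrak{c}_4+\mathfrak{c}_0\mathfrak{c}_1\mathfrak{c}_5$. The product $\mathfrak{c}_3\Delta_2\Delta_3$ then cancels, and after dividing by $\mathfrak{c}_1$ it remains to show
\[
\mathfrak{c}_1\Delta_4=\mathfrak{c}_1\mathfrak{c}_4\Delta_3-\mathfrak{c}_0\mathfrak{c}_5\Delta_3-\mathfrak{c}_5\Delta_2^2+\mathfrak{c}_1^2\mathfrak{c}_6\Delta_2 .
\]

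\emph{Step 3.} Expand $\mathfrak{c}_1\Delta_4$ using the stated formula for $\Delta_4$. The terms $\mathfrak{c}_1\mathfrak{c}_4\Delta_3$ and $\mathfrak{c}_1^2\mathfrak{c}_6\Delta_2$ match, and what remains carries the common factor $\mathfrak{c}_5$. After removing it, the claim reduces to
\[
\mathfrak{c}_0\Delta_3+\Delta_2^2=\mathfrak{c}_1\mathfrak{c}_2\Delta_2-\mathfrak{c}_0\mathfrak{c}_1^2\mathfrak{c}_4+\mathfrak{c}_0^2\mathfrak{c}_1\mathfrak{c}_5 .
\]
Substituting $\Delta_3$ once more, the $\mathfrak{c}_4$ and $\mathfrak{c}_5$ terms cancel, leaving $\Delta_2^2+\mathfrak{c}_0\mathfrak{c}_3\Delta_2=\mathfrak{c}_1\mathfrak{c}_2\Delta_2$. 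This is exactly $\Delta_2=\mathfrak{c}_1\mathfrak{c}_2-\mathfrak{c}_0\mathfrak{c}_3$ multiplied by $\Delta_2$.

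There is no conceptual obstacle. The only delicate point is bookkeeping in Step 3, namely the signs in the expanded $\Delta_4$: the terms $-\mathfrak{c}_0\mathfrak{c}_1\mathfrak{c}_4\mathfrak{c}_5$ inside $\Delta_3-\mathfrak{c}_0\mathfrak{c}_1\mathfrak{c}_5$ and $+2\mathfrak{c}_0\mathfrak{c}_1\mathfrak{c}_4\mathfrak{c}_5$ partially cancel. As a safeguard, I would cross-check the final identity symbolically, for instance with the attached \texttt{MATLAB} routines.
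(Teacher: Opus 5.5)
Your proposal is correct and follows essentially the paper's route. Step 1 reduces the claim to $\Delta_2B_1^+=\Delta_3^2+\mathfrak{c}_1^2\Delta_4$ with $B_1^+=\mathfrak{c}_3\Delta_3-\mathfrak{c}_1\mathfrak{c}_5\Delta_2+\mathfrak{c}_1^3\mathfrak{c}_6$, which is exactly the key identity the paper proves. Steps 2--3 use the same substitutions for $\Delta_3$, $\Delta_4$ and $\Delta_2$, run backwards to a tautology rather than forwards from $\Delta_2B_1^+$; cancelling $\mathfrak{c}_6$, $\mathfrak{c}_1$ and $\mathfrak{c}_5$ is harmless, since each reduced identity multiplied back gives the previous one.
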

\proof 
Introduce  $B_1^+
:=
\mathfrak{c}_3\Delta_3-\mathfrak{c}_1\mathfrak{c}_5\Delta_2+\mathfrak{c}_1^3\mathfrak{c}_6$ so that $\Delta_5=\mathfrak{c}_5\Delta_4-\mathfrak{c}_6B_1^+$, and derive:
\begin{equation}\label{eq:47}
\begin{split}
\Delta_2 B_1^+
&=\mathfrak{c}_3\Delta_2\Delta_3
-\mathfrak{c}_1\mathfrak{c}_5\Delta_2^2
+\mathfrak{c}_1^3\mathfrak{c}_6\Delta_2\\
&=(\Delta_3+\mathfrak{c}_1^2\mathfrak{c}_4-\mathfrak{c}_0\mathfrak{c}_1\mathfrak{c}_5)\Delta_3
-\mathfrak{c}_1\mathfrak{c}_5\Delta_2^2
+\mathfrak{c}_1^3\mathfrak{c}_6\Delta_2\\
&=\Delta_3^2
+\mathfrak{c}_1[
\mathfrak{c}_1\mathfrak{c}_4\Delta_3
-\mathfrak{c}_5(\mathfrak{c}_0\Delta_3+\Delta_2^2)
+\mathfrak{c}_1^2\mathfrak{c}_6\Delta_2
]\\
&=\Delta_3^2
+\mathfrak{c}_1[
\mathfrak{c}_1\mathfrak{c}_4\Delta_3
-\mathfrak{c}_5(\mathfrak{c}_0\Delta_3+\mathfrak{c}_1\mathfrak{c}_2\Delta_2-\mathfrak{c}_0\mathfrak{c}_3\Delta_2)
+\mathfrak{c}_1^2\mathfrak{c}_6\Delta_2
]\\
&=\Delta_3^2
+\mathfrak{c}_1[
\mathfrak{c}_1\mathfrak{c}_4\Delta_3
-\mathfrak{c}_5(\mathfrak{c}_0\Delta_3+\mathfrak{c}_1\mathfrak{c}_2\Delta_2-\mathfrak{c}_0(\Delta_3+\mathfrak{c}_1^2\mathfrak{c}_4-\mathfrak{c}_0\mathfrak{c}_1\mathfrak{c}_5))
+\mathfrak{c}_1^2\mathfrak{c}_6\Delta_2
]\\
&=\Delta_3^2
+\mathfrak{c}_1^2[
\mathfrak{c}_4\Delta_3
-\mathfrak{c}_5(\mathfrak{c}_2\Delta_2-\mathfrak{c}_0\mathfrak{c}_1\mathfrak{c}_4+\mathfrak{c}_0^2\mathfrak{c}_5)
+\mathfrak{c}_1\mathfrak{c}_6\Delta_2
]\\
&=\Delta_3^2+\mathfrak{c}_1^2\Delta_4.
\end{split}
\end{equation}
To show \eqref{HurIddelta25}, we compute: 
\[
\begin{split}
\Delta_2\Delta_5&=\mathfrak{c}_5\Delta_2\Delta_4
-\mathfrak{c}_6\Delta_2B^+_1\\
&=\mathfrak{c}_5\Delta_2\Delta_4
-\mathfrak{c}_6(\Delta_3^2+\mathfrak{c}_1^2\Delta_4)\\
&=(\mathfrak{c}_5\Delta_2-\mathfrak{c}_1^2\mathfrak{c}_6)\Delta_4-\mathfrak{c}_6\Delta_3^2.
\end{split}\]
\endproof

The following technical Proposition is needed. 

\begin{prop}[Positive building blocks.]\label{prop:pbb}
The following inequalities hold:
\begin{enumerate}
\item $N_6P_5-N_5P_6>0$
    \item $\mathfrak{c}_i>0$ for $i=0,...,4$;
    \item $\Delta_i >0$ for $i=1,...,3$;
    \item if $\mathfrak{c}_6>0$, then $\mathfrak{c}_5>0$
    \item if $\Delta_4>0$ then
    \begin{equation}
        B_1^+
:=
\mathfrak{c}_3\Delta_3-\mathfrak{c}_1\mathfrak{c}_5\Delta_2+\mathfrak{c}_1^3\mathfrak{c}_6>0
    \end{equation}
    \item 
    \[B_2^+
:= P_5\left(
\mathfrak{c}_2\Delta_2
-2\mathfrak{c}_0\mathfrak{c}_1\mathfrak{c}_4
+\mathfrak{c}_0^2(P_5+\mathfrak{c}_5)
\right)
-\mathfrak{c}_1P_6\left(
\mathfrak{c}_1\mathfrak{c}_2-2\mathfrak{c}_0\mathfrak{c}_3
\right)>0\].
\end{enumerate}
\end{prop}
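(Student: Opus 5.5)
The plan is to treat each item as a statement about multilinear polynomials in the sixteen positive symbols $\mathbf{p}$. Positivity is then certified either by showing that all monomials carry positive sign after cancellation, or by a short algebraic reduction to items already established. Items 2 and 3 are of the first kind. Expanding $\mathfrak{c}_1,\dots,\mathfrak{c}_4$ from $G_{\mathrm{red}}$ (sums of principal minors, via Cauchy--Binet) already shows they have only positive coefficients, which gives item 2. For $\Delta_2=\mathfrak{c}_1\mathfrak{c}_2-\mathfrak{c}_3$ and $\Delta_3=\mathfrak{c}_3\Delta_2-\mathfrak{c}_1^2\mathfrak{c}_4+\mathfrak{c}_1\mathfrak{c}_5$, I would expand symbolically (MATLAB) and check that after cancellation every surviving monomial is positive. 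For $\Delta_3$ the negative part $-\mathfrak{c}_1N_5$ must be absorbed. To check this I would collect all monomials containing $v_6y_{12}$ and exhibit, for each negative monomial, a dominating positive monomial with at least the same coefficient.

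Item 1 is a direct computation using \eqref{eq:N56}. We have $N_6=v_6y_{12}\big((a+e_1+u_2+u_3)\,b_4cf_{10}+(c+f_7+x_8+x_9)\,ab_{10}e_4+ace_4f_{10}\big)$ and $N_5=v_6y_{12}(b_4cf_{10}+ab_{10}e_4)$. I would compute $N_6P_5-N_5P_6$ symbolically. The mechanism I expect is that $P_6$ is essentially $P_5$ times the diagonal weights of $N$ minus corrections, so that the difference reduces to a polynomial with positive coefficients. Item 4 then follows quickly. Suppose $\mathfrak{c}_6>0$, so that $P_6>N_6$. If $N_5\ge P_5$, item 1 would give $N_6P_5>N_5P_6\ge P_5P_6$, hence $N_6>P_6$, a contradiction. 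So $P_5>N_5$.

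Item 5 follows from Proposition \ref{prop:Hdelta25}. Its proof gives $\Delta_2B_1^+=\Delta_3^2+\mathfrak{c}_1^2\Delta_4$. Since $\Delta_2>0$ by item 3, $\Delta_4>0$ implies $B_1^+>0$.

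Item 6 is the main obstacle. Substituting $\mathfrak{c}_5=P_5-N_5$ gives a degree-$11$ polynomial. The $N_5$-dependence enters only through $\mathfrak{c}_0^2P_5(P_5+\mathfrak{c}_5)=P_5(2P_5-N_5)$, so it is linear in $N_5$. I would rewrite
\[B_2^+=P_5\big(\mathfrak{c}_2\Delta_2-2\mathfrak{c}_1\mathfrak{c}_4+2P_5-N_5\big)-\mathfrak{c}_1P_6(\mathfrak{c}_1\mathfrak{c}_2-2\mathfrak{c}_3).\]
The strategy is to pair $\mathfrak{c}_2\Delta_2P_5$ against $\mathfrak{c}_1^2\mathfrak{c}_2P_6$, and $\mathfrak{c}_1\mathfrak{c}_4P_5$ against $\mathfrak{c}_1\mathfrak{c}_3P_6$. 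I expect $P_6$ to be bounded monomialwise by (diagonal entries of $N$)$\cdot P_5$. The fallback is a full symbolic expansion, checking that all negative monomials are dominated termwise.
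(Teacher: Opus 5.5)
Items 1--5 are fine and essentially match the paper. Items 1--3 are certified by exact expansion; for $\Delta_3$, your ``domination'' has to mean exact cancellation of identical monomials, since a different positive monomial cannot dominate a negative one on the whole orthant. Item 4 is the contrapositive of the paper's chain $N_6P_5>N_5P_6>N_5N_6$. Item 5 uses the same identity $\Delta_2B_1^+=\Delta_3^2+\mathfrak{c}_1^2\Delta_4$.

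Item 6 has a genuine gap. Your rewriting $B_2^+=\mathfrak{c}_2(P_5\Delta_2-\mathfrak{c}_1^2P_6)+2\mathfrak{c}_1(\mathfrak{c}_3P_6-\mathfrak{c}_4P_5)+P_5(2P_5-N_5)$ is correct, and the first pairing is the right one. The second pairing fails, however, because $\mathfrak{c}_3P_6-\mathfrak{c}_4P_5$ takes negative values. Every Leibniz term of $\mathfrak{c}_6=\det(-G_{\mathrm{red}})$ uses an entry of the second row $(0,0,0,v_6,0,-x_9)$, so every monomial of $P_6$ contains $v_6$ or $x_9$. By contrast, $\mathfrak{c}_4$ and $P_5$ contain monomials free of both, e.g.\ $au_3v_5y_{11}$ and $au_3v_5y_{11}x_8$. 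Hence $\mathfrak{c}_3P_6-\mathfrak{c}_4P_5<0$ for small $v_6,x_9$. Your expected bound of $P_6$ by (diagonal of $N$)$\cdot P_5$ is an upper bound: it helps the first pair but points the wrong way for the second.

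There is a second problem. Item 6 carries no hypothesis $\mathfrak{c}_5>0$, and $2P_5-N_5$ does become negative. Take $v_6,y_{12},b_4,c,f_{10}$ large: by multilinearity $N_5$ then grows one degree faster than $P_5$, which lacks the monomial $v_6y_{12}b_4cf_{10}$. So the $-P_5N_5$ piece is also left unabsorbed.

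The missing idea is to pay for $-2\mathfrak{c}_1\mathfrak{c}_4P_5$ and the $\mathfrak{c}_5$-terms with $\Delta_3$ rather than with $P_6$. Multiply by $\mathfrak{c}_1$ and use the identity $\Delta_2(\mathfrak{c}_1\mathfrak{c}_2-2\mathfrak{c}_3)+2(\Delta_3-\mathfrak{c}_1\mathfrak{c}_5)=\mathfrak{c}_1(\mathfrak{c}_2\Delta_2-2\mathfrak{c}_1\mathfrak{c}_4)$, which gives
\[\mathfrak{c}_1B_2^+=(P_5\Delta_2-\mathfrak{c}_1^2P_6)(\mathfrak{c}_1\mathfrak{c}_2-2\mathfrak{c}_3)+2P_5\Delta_3+\mathfrak{c}_1P_5N_5.\]
Both factors of the product are coefficientwise positive by expansion, and $\Delta_3>0$ by item 3.

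In your terms, write $X=P_5\Delta_2-\mathfrak{c}_1^2P_6$. After multiplying by $\mathfrak{c}_1$, an amount $2\mathfrak{c}_3X$ must be borrowed from $\mathfrak{c}_1\mathfrak{c}_2X$. Adding it to $\mathfrak{c}_1$ times your second and third groups yields exactly $2P_5\Delta_3+\mathfrak{c}_1P_5N_5\ge 0$.

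Your fallback, expanding $B_2^+$ and dominating negative monomials termwise, is not a certificate as stated. Nothing guarantees that $B_2^+$ itself is coefficientwise positive; the paper certifies only $\mathfrak{c}_1B_2^+$. And any negative monomials that survive cannot be dominated by different monomials on the whole orthant.
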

\proof We prove the statements in sequence. Substitute $\mathfrak{c}_0=1$ throughout.

\begin{enumerate}
    \item Explicit expansion of $N_6P_5-N_5P_6$ yields a degree 11  homogeneous polynomial with 1588 monomials, all with positive coefficient.
    \item The coefficients $\mathfrak{c}_0,\mathfrak{c}_1,..., \mathfrak{c}_4>0$ are positive: it follows from explicit expansion. 
    \item $\Delta_1=\mathfrak{c}_1>0$ is trivial. The positivity of $\Delta_2$ and $\Delta_3$ is achieved by explicit expansion: $\Delta_2$ consists of 746 monomials with positive coefficients, while $\Delta_3$ consists of 41,243 monomials with positive coefficients.
    \item For $i=5,6$, $\mathfrak{c}_i>0$ is equivalent to $P_i>N_i$. Then, $\mathfrak{c}_6>0$ yields
    \[N_5P_6>N_5N_6,\]
    and combining with $N_6P_5>N_5P_6$ already proved, yields
    \[N_6P_5>N_5N_6,\]
    which proves $\mathfrak{c}_5>0$ via division by $N_6$.

\item It follows via the expansion \eqref{eq:47} above, from positivity of $\Delta_2$ proven above and of $\Delta_4$ assumed.
\item From the expansion of $\Delta_3$, it follows the following algebraic equality:
\[ 
\begin{split}
    \Delta_2(\mathfrak{c}_1\mathfrak{c}_2-2\mathfrak{c}_3)+2(\Delta_3-\mathfrak{c}_1\mathfrak{c}_5)&=\Delta_2(\mathfrak{c}_1\mathfrak{c}_2-2\mathfrak{c}_3)+2\mathfrak{c}_3\Delta_2-2\mathfrak{c}_1^2\mathfrak{c}_4\\&=\mathfrak{c}_1(\mathfrak{c}_2\Delta_2-2\mathfrak{c}_1\mathfrak{c}_4),
\end{split}
\]
which yields:
\[
\begin{split}
\mathfrak{c}_1B^+_2=&\mathfrak{c}_1P_5(\mathfrak{c}_2\Delta_2-2\mathfrak{c}_1\mathfrak{c}_4+P_5+\mathfrak{c}_5)
-\mathfrak{c}_1^2P_6(\mathfrak{c}_1\mathfrak{c}_2-2\mathfrak{c}_3)\\
=&P_5[\mathfrak{c}_1(\mathfrak{c}_2\Delta_2-2\mathfrak{c}_1\mathfrak{c}_4)+\mathfrak{c}_1(P_5+\mathfrak{c}_5)]-\mathfrak{c}_1^2P_6(\mathfrak{c}_1\mathfrak{c}_2-2\mathfrak{c}_3)\\
=&P_5[\Delta_2(\mathfrak{c}_1\mathfrak{c}_2-2\mathfrak{c}_3)+2(\Delta_3-\mathfrak{c}_1\mathfrak{c}_5)+\mathfrak{c}_1(P_5+\mathfrak{c}_5)]-\mathfrak{c}_1^2P_6(\mathfrak{c}_1\mathfrak{c}_2-2\mathfrak{c}_3)\\
=&(P_5\Delta_2-\mathfrak{c}_1^2P_6)(\mathfrak{c}_1\mathfrak{c}_2-2\mathfrak{c}_3)+P_5[2\Delta_3-\mathfrak{c}_1(\mathfrak{c}_5-P_5)]\\
=&(P_5\Delta_2-\mathfrak{c}_1^2P_6)(\mathfrak{c}_1\mathfrak{c}_2-2\mathfrak{c}_3)+2P_5\Delta_3+\mathfrak{c}_1P_5N_5.
\end{split}\]
\end{enumerate}
Explicit computation of $(P_5\Delta_2-\mathfrak{c}_1^2P_6)$ and $(\mathfrak{c}_1\mathfrak{c}_2-2\mathfrak{c}_3)$ shows that these expressions are both positive, respectively with 72,468 positive  and 744 positive monomials. Positivity of $\mathfrak{c}_1, \Delta_3, P_5, N_5$ concludes for $B^+_2>0$. The necessary explicit expansions are implemented in the attached \texttt{MATLAB} script \texttt{dualfutile\_positivity\_checks.m}.
\endproof

The next lemma states path-connectivity of the parameter region where $\mathfrak{c}_6>0$.

\begin{lemma}\label{lem:c6pos}
Let $\mathcal{C}^+_6\subset \mathbb{R}^{16}_{>0}$ be the parameter region where $\mathfrak{c}_6>0$, i.e.
\begin{equation}
    \mathcal{C}^+_6:=\{(a,b_4,b_{10},c,e_1,e_4,f_7,f_{10},u_2,u_3,v_5,v_6,x_8,x_9,y_{11},y_{12})\in\mathbb{R}^{16}_{>0}\;|\;\mathfrak{c}_6>0\}.
\end{equation}
Then $\mathcal{C}^+_6$ is path-connected.
\end{lemma}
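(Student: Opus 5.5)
The plan is to use the multilinearity of $\mathfrak{c}_6$ in the sixteen parameters, together with the explicit form of the negative part $N_6$ in \eqref{eq:N56}. Every negative monomial of $\mathfrak{c}_6$ contains the factor $v_6y_{12}$. Write $\mathbf{q}\in\mathbb{R}^{15}_{>0}$ for all parameters except $v_6$. Since $\mathfrak{c}_6$ is multilinear, it is affine in $v_6$:
\[
\mathfrak{c}_6(\mathbf{q},v_6)=A(\mathbf{q})+v_6\,C(\mathbf{q}),
\]
where $A(\mathbf{q})=\mathfrak{c}_6|_{v_6=0}=P_6|_{v_6=0}$ contains only monomials of $P_6$. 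So $A$ is a polynomial with positive coefficients.

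First I will check that $A$ is not identically zero, so that $A(\mathbf{q})>0$ on all of $\mathbb{R}^{15}_{>0}$. This follows from the explicit expansion. Alternatively it follows from the example of Sec.~\ref{sec:examplehopfpr}: there $\det\overline{G}_{\mathrm{red}}(0)=6080453/50>0$, and $\mathfrak{c}_6=\det G_{\mathrm{red}}$ because the reduced matrix is $6\times 6$. Both $A$ and $C$ are continuous. Hence for fixed $\mathbf{q}$ the fibre $\{v_6>0:\ \mathfrak{c}_6>0\}$ is an interval $(0,t(\mathbf{q}))$ with $t(\mathbf{q})\in(0,\infty]$. In particular, if $(\mathbf{q},v_6)\in\mathcal{C}^+_6$, then the whole segment $\{(\mathbf{q},s)\,:\,0<s\le v_6\}$ lies in $\mathcal{C}^+_6$.

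Next I connect two arbitrary points $(\mathbf{q},v_6)$ and $(\mathbf{q}',v_6')$ of $\mathcal{C}^+_6$. Let $K=\{(1-\tau)\mathbf{q}+\tau\mathbf{q}'\,:\,\tau\in[0,1]\}\subset\mathbb{R}^{15}_{>0}$, which is compact by convexity of the orthant. By continuity, $m:=\min_K A>0$ and $M:=\max_K|C|<\infty$. Choose
\[
0<\delta<\min\{v_6,\,v_6',\,m/(M+1)\}.
\]
Then $A+\delta C\ge m-\delta M>0$ on $K$. The path consists of three pieces:
\begin{itemize}
\item lower $v_6$ from $v_6$ to $\delta$ at fixed $\mathbf{q}$, which stays in $\mathcal{C}^+_6$ by the interval property;
\item move linearly from $(\mathbf{q},\delta)$ to $(\mathbf{q}',\delta)$, which stays in $\mathcal{C}^+_6$ by the choice of $\delta$;
\item raise $v_6$ from $\delta$ to $v_6'$ at fixed $\mathbf{q}'$, again by the interval property.
\end{itemize}
The concatenation is a continuous path in $\mathcal{C}^+_6$, which proves path-connectivity.

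The only genuine obstacle is the first step: certifying that the $v_6$-free part of $\mathfrak{c}_6$ has no negative monomials and does not vanish identically. The first part is immediate from \eqref{eq:N56}, since every monomial of $N_6$ contains $v_6$. The second part only needs one positive evaluation, which the numerical example already supplies. The same argument works with $y_{12}$ in place of $v_6$.
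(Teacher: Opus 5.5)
Your proposal is correct and follows essentially the same route as the paper. Both arguments write $\mathfrak{c}_6$ as affine in $v_6$ with a strictly positive $v_6$-free part, then build a three-segment path: lower $v_6$ to a small $\varepsilon$, interpolate the other fifteen parameters at fixed $v_6=\varepsilon$ (with $\varepsilon$ chosen by compactness), and raise $v_6$ again. Your explicit bound $\delta<m/(M+1)$ and your check that $P_6|_{v_6=0}\not\equiv 0$ (via the example's value at $v_6=0$) spell out details the paper only asserts.
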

\begin{proof}
    For any two parameter points in $\mathcal{C}^+_6$, we construct explicitly a path within $\mathcal{C}^+_6$ connecting them. The main structural observation is that all the negative terms in $\mathfrak{c}_6$ contain the parameters $v_6y_{12}$, see \eqref{eq:N56}. We utilize $v_6$, but a $\sigma$-symmetric argument holds via $y_{12}$. For notation, let $\mathbf{p}^{\vee v_6}=(a,b_4,b_{10},c,e_1,e_4,f_7,f_{10},u_2,u_3,v_5,x_8,x_9,y_{11},y_{12})$ indicate the fifteen parameters, excluded $v_6$, and let $(\mathbf{p}^{\vee v_6}_1,v_6^{(1)})$, $(\mathbf{p}^{\vee v_6}_2,v_6^{(2)})$ be any two points in $\mathcal{C}^+_6$. Let $t\in[0,3]$, we construct a path $p(t)$ from $(\mathbf{p}^{\vee v_6}_1,v_6^{(1)})$ to $(\mathbf{p}^{\vee v_6}_2,v_6^{(2)})$ in three steps as follows: 
    \begin{equation}
        p(t,\varepsilon)=\begin{cases}
            (\mathbf{p}^{\vee v_6}_1,(1-t)v_6^{(1)}+t \varepsilon)\quad&\text{for $t\in[0,1]$};\\
            ((2-t)\mathbf{p}^{\vee v_6}_1+(t-1)\,\mathbf{p}^{\vee v_6}_2,\varepsilon)\quad&\text{for $t\in[1,2]$};\\
            (\mathbf{p}^{\vee v_6}_2,(3-t)\varepsilon+(t-2)\,v_6^{(2)})\quad&\text{for $t\in[2,3]$,}
        \end{cases}
    \end{equation}
where the notation $t\,\mathbf{p}^{\vee v_6}$ is intended component-wise. Clearly, any such $p(t,\varepsilon)$, for $\varepsilon>0$ is a path in $\mathbb{R}^{16}_{>0}$. Moreover, $\mathfrak{c}_6>0$ along $p(t,\varepsilon)$  with $\varepsilon=0$ in $t\in[1,2]$, since $N_6(\varepsilon=0)\equiv0$ and $P_6(\varepsilon=0)>0$. Due to linearity of $v_6$ in $\mathfrak{c}_6$, it follows that $p(t,\varepsilon)\in \mathcal{C}^+_{6}$ for $t\in[0,1]\cup[2,3]$ and any $0<\varepsilon <\operatorname{min}(v_6^{(1)},v_6^{(2)})$. Moreover, by continuity and compactness of $[1,2]$, there exists $\bar{\varepsilon}>0$ small enough such that $p(t,\bar{\varepsilon})\in \mathcal{C}^+_6$ and the lemma is proven.
\end{proof}

Now, we get to the main Lemma of this section. 

\begin{lemma}\label{lem:centralhurwitz}
    Assume $\mathfrak{c}_6>0$ and $\Delta_4>0$. If 
    \begin{equation}\label{eq:star}
       \mathcal{T}:=P_5\Delta_4-P_6B^+_1-N_5B^+_2\ge 0,
    \end{equation}
    then $\Delta_5>0$.
\end{lemma}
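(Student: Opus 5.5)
The plan is to compare $\Delta_5$ with $\mathcal{T}$ directly, splitting $\mathfrak{c}_5=P_5-N_5$ and $\mathfrak{c}_6=P_6-N_6$ only in the \emph{outer} coefficients of $\Delta_5=\mathfrak{c}_5\Delta_4-\mathfrak{c}_6B_1^+$. The quantities $\Delta_4$ and $B_1^+$ are left evaluated at the true coefficients, as in \eqref{eq:star}. This gives the exact identity
\begin{equation*}
\Delta_5-\mathcal{T}=N_6B_1^+ +N_5\bigl(B_2^+-\Delta_4\bigr).
\end{equation*}
Since $\mathcal{T}\ge 0$ by assumption, it suffices to show that the right-hand side is strictly positive whenever $\mathfrak{c}_6>0$ and $\Delta_4>0$.

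First I collect the signs from Proposition \ref{prop:pbb}. By item 4, $\mathfrak{c}_6>0$ gives $\mathfrak{c}_5>0$. By item 5, $\Delta_4>0$ gives $B_1^+>0$. Items 3 and 6 give $\Delta_2,\Delta_3>0$ and $B_2^+>0$. Moreover, $N_5,N_6>0$ by \eqref{eq:N56}. Hence $N_6B_1^+>0$, and if $B_2^+\ge\Delta_4$ we are done immediately.

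In general I would not expect $B_2^+\ge \Delta_4$, so the second step is to trade part of $N_6B_1^+$ against the possibly negative term. Item 1 of Proposition \ref{prop:pbb}, $N_6P_5>N_5P_6$, gives
\begin{equation*}
N_6B_1^+>\frac{N_5P_6}{P_5}B_1^+ .
\end{equation*}
The claim is then reduced to showing
\begin{equation*}
P_6B_1^+ + P_5\bigl(B_2^+-\Delta_4\bigr)\ge 0 .
\end{equation*}
To handle this, I would expand $\Delta_4=\mathfrak{c}_4(\Delta_3-\mathfrak{c}_1\mathfrak{c}_5)-\mathfrak{c}_5(\mathfrak{c}_2\Delta_2-2\mathfrak{c}_1\mathfrak{c}_4)-\mathfrak{c}_5^2+\mathfrak{c}_1\mathfrak{c}_6\Delta_2$ and compare it term by term with $B_2^+$. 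The definition of $B_2^+$ contains $P_5(\mathfrak{c}_2\Delta_2-2\mathfrak{c}_1\mathfrak{c}_4+P_5+\mathfrak{c}_5)$, which visibly mirrors the $\mathfrak{c}_5$-terms of $\Delta_4$ with $P_5$ in place of $\mathfrak{c}_5$. I would use $P_5\ge\mathfrak{c}_5>0$ to dominate those terms. For the $\mathfrak{c}_6$-contributions $\mathfrak{c}_1\mathfrak{c}_6\Delta_2$ and $\mathfrak{c}_1^3\mathfrak{c}_6$ inside $B_1^+$, I would use $\mathfrak{c}_6\le P_6$ together with the $P_6$-term of $B_2^+$. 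To move between $\Delta_3$, $\Delta_2$ and the $\mathfrak{c}_i$, I would reuse the identities $\Delta_2B_1^+=\Delta_3^2+\mathfrak{c}_1^2\Delta_4$ from \eqref{eq:47} and $\mathfrak{c}_1(\mathfrak{c}_2\Delta_2-2\mathfrak{c}_1\mathfrak{c}_4)=\Delta_2(\mathfrak{c}_1\mathfrak{c}_2-2\mathfrak{c}_3)+2(\Delta_3-\mathfrak{c}_1\mathfrak{c}_5)$ from the proof of item 6.

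The main obstacle is this last regrouping: the precise form of $B_2^+$ was evidently designed so that the residual is a nonnegative combination of the building blocks, and I do not yet see the exact bookkeeping. If a direct grouping fails, I would instead treat the residual as a polynomial in the two variables $(\mathfrak{c}_5,\mathfrak{c}_6)$ on the box $0<\mathfrak{c}_5\le P_5$, $0<\mathfrak{c}_6\le P_6$, with the other coefficients fixed, and check its sign at the corners and along the edges. A further fallback is a continuity argument in the spirit of Lemma \ref{lem:c6pos}: Proposition \ref{prop:Hdelta25} implies that $\Delta_5$ cannot vanish while $\Delta_4>0$ and $\mathfrak{c}_5\Delta_2-\mathfrak{c}_1^2\mathfrak{c}_6\le 0$. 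Combined with positivity of $\Delta_5$ at $v_6\to 0$, where $N_5,N_6$ vanish, this would propagate $\Delta_5>0$ through $\mathcal{C}_6^+$.
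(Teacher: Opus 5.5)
\textbf{There is a genuine gap: the positivity step is never proved, and the reduction you propose is false.} Your identity $\Delta_5-\mathcal T=N_6B_1^++N_5(B_2^+-\Delta_4)$ is correct. Applying item~1 of Proposition~\ref{prop:pbb} to the term $N_6B_1^+$ leaves you needing $P_6B_1^++P_5(B_2^+-\Delta_4)\ge0$, and this inequality fails at points satisfying the hypotheses.

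\emph{Counterexample to the reduced inequality.} Take $a=u_3=c=x_9=v_5=1$ and $y_{11}=y_{12}=\tfrac12$, and let $u_2,x_8,v_6,e_1,e_4,b_4,b_{10},f_7,f_{10}\to0^+$. Then:
\begin{enumerate}
\item $N_5,N_6\to0$, and $G_{\mathrm{red}}$ tends to a block-triangular matrix with $g_{\mathrm{red}}\to(\lambda+1)^6$;
\item one computes $\Delta_4\to8064$, $B_1^+\to15616$, $B_2^+\to4992$ and $\mathcal T,\Delta_5\to32768$, so $\mathfrak c_6>0$, $\Delta_4>0$ and $\mathcal T>0$ all hold nearby;
\item yet $P_6B_1^++P_5(B_2^+-\Delta_4)\to15616-6\cdot3072=-2816<0$.
\end{enumerate}
In this limit your quantity is exactly $\mathfrak c_5B_2^+-\Delta_5$. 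So no regrouping of the building blocks can establish it, and your box check fails already at the corner $(\mathfrak c_5,\mathfrak c_6)=(P_5,P_6)$.

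\emph{The continuity fallback does not work either.} It never uses $\mathcal T\ge0$. The lemma is a pointwise statement, and its hypotheses need not hold along any path. Proposition~\ref{prop:Hdelta25} only forces $\Delta_5<0$ where $\mathfrak c_5\Delta_2-\mathfrak c_1^2\mathfrak c_6\le0$; it places no obstruction on $\Delta_5$ vanishing elsewhere. If the fallback worked, it would essentially prove Conjecture~\ref{conjecture}, which is still open.

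\textbf{The missing idea is the weighting:} compare $P_6\Delta_5$ with $\mathfrak c_6\mathcal T$, not $\Delta_5$ with $\mathcal T$. Multiply $\Delta_5=\mathfrak c_5\Delta_4-\mathfrak c_6B_1^+$ by $P_6$, then add and subtract $\mathfrak c_6P_5\Delta_4$. This gives $P_6\Delta_5=(\mathfrak c_5P_6-\mathfrak c_6P_5)\Delta_4+\mathfrak c_6(P_5\Delta_4-P_6B_1^+)$, and $\mathfrak c_5P_6-\mathfrak c_6P_5=N_6P_5-N_5P_6$. Hence
\[
P_6\Delta_5-\mathfrak c_6\mathcal T=(N_6P_5-N_5P_6)\Delta_4+\mathfrak c_6N_5B_2^+>0,
\]
using items~1 and~6 together with $\Delta_4>0$ and $\mathfrak c_6>0$. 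In your notation the left-hand side is $P_6(\Delta_5-\mathcal T)+N_6\mathcal T$. It follows that $P_6\Delta_5>\mathfrak c_6\mathcal T\ge0$.

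This is the paper's proof. Item~1 belongs on the coefficient of $\Delta_4$, not on $N_6B_1^+$. The internal structure of $B_2^+$ plays no role here; only $B_2^+>0$ is used, and the term $-N_5B_2^+$ merely strengthens the hypothesis.
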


\proof
We recall the split $\mathfrak{c}_5=P_5-N_5$ and $\mathfrak{c}_6=P_6-N_6$ with $P_5,N_5,P_6,N_6>0$, and positivity of $B^+_2$, $(N_6P_5-N_5P_6)$ (Prop.~\ref{prop:pbb}), $\Delta_4$ (by assumption), $\mathfrak{c}_6$ (by assumption), and the nonnegativity of $P_5\Delta_4-P_6B^+_1-N_5B^+_2\ge0$ (by assumption). We get:
\begin{equation}\label{eq:TfromDelta5}
    \begin{split}
P_6\Delta_5&=P_6(\mathfrak{c}_5\Delta_4
-\mathfrak{c}_3\mathfrak{c}_6\Delta_3
+\mathfrak{c}_1\mathfrak{c}_5\mathfrak{c}_6\Delta_2
-\mathfrak{c}_1^3\mathfrak{c}_6^2)\\
&=P_6(\mathfrak{c}_5\Delta_4 - \mathfrak{c}_6B_1^+)\\
&=\mathfrak{c}_5P_6\Delta_4-\mathfrak{c}_6P_6B^+_1-\mathfrak{c}_6P_5\Delta_4+\mathfrak{c}_6P_5\Delta_4\\
&=(\mathfrak{c}_5P_6-\mathfrak{c}_6P_5)\Delta_4+\mathfrak{c}_6(P_5\Delta_4-P_6B_1^+)\\
&=(P_5P_6-N_5P_6-P_6P_5+N_6P_5)\Delta_4+\mathfrak{c}_6(P_5\Delta_4-P_6B_1^+)\\
&=(N_6P_5-N_5P_6)\Delta_4+\mathfrak{c}_6(P_5\Delta_4-P_6B_1^+)\\
&>\mathfrak{c}_6(P_5\Delta_4-P_6B_1^+)\\
&>\mathfrak{c}_6(P_5\Delta_4-P_6B_1^+-N_5B^+_2)\\
&\ge 0.
\end{split}
\end{equation}
And the statement follows.
\endproof

Lemma~\ref{lem:centralhurwitz} yields the following corollary, which marks my closest advancement to Conjecture \ref{conjecture}.

\begin{cor}
    Assume $\mathcal{T}\ge0$ on $\mathcal{C}^+_6$. Then $G_{\mathrm{red}}$ is Hurwitz stable throughout $\mathcal{C}^+_6$.
\end{cor}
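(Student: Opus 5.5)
The plan is a continuation argument on the path-connected set $\mathcal{C}^+_6$ of Lemma~\ref{lem:c6pos}. Let $\mathcal{S}\subset\mathcal{C}^+_6$ be the set of parameters for which $G_{\mathrm{red}}$ is Hurwitz stable. I will show that $\mathcal{S}$ is nonempty, open, and relatively closed in $\mathcal{C}^+_6$. Connectedness then gives $\mathcal{S}=\mathcal{C}^+_6$.

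\emph{Openness and nonemptiness.} Openness follows from continuity of the eigenvalues, or equivalently of the Hurwitz determinants, in $\mathbf{p}$. For nonemptiness I would exhibit one explicit point, for instance $\mathbf{p}=(1,\dots,1)$. At that point I would check $\mathfrak{c}_6>0$ and $\Delta_4,\Delta_5>0$ by direct rational evaluation, or simply by computing the spectrum. A more structural alternative would take $v_6$ very small, so that $N_5,N_6$ are negligible, but an explicit rational check is the safest route.

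\emph{Relative closedness: setup.} Let $\mathbf{p}^*\in\mathcal{C}^+_6$ be a limit of points of $\mathcal{S}$. By continuity we have $\Delta_4(\mathbf{p}^*)\ge0$ and $\Delta_5(\mathbf{p}^*)\ge0$. By Prop.~\ref{prop:pbb}, $\Delta_1,\Delta_2,\Delta_3>0$, and also $\mathfrak{c}_5>0$ because $\mathfrak{c}_6>0$. The key tool is the identity of Prop.~\ref{prop:Hdelta25}:
\[
\Delta_2\Delta_5=(\mathfrak{c}_5\Delta_2-\mathfrak{c}_1^2\mathfrak{c}_6)\Delta_4-\mathfrak{c}_6\Delta_3^2 .
\]

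\emph{Relative closedness: excluding boundary cases.} First suppose $\Delta_4(\mathbf{p}^*)=0$. Then the identity gives $\Delta_2\Delta_5=-\mathfrak{c}_6\Delta_3^2<0$, hence $\Delta_5<0$, which contradicts $\Delta_5\ge0$. So $\Delta_4(\mathbf{p}^*)>0$. Now Lemma~\ref{lem:centralhurwitz} applies, using $\mathfrak{c}_6>0$, $\Delta_4>0$ and $\mathcal{T}\ge0$ by hypothesis, and yields $\Delta_5(\mathbf{p}^*)>0$. Finally $\Delta_6=\mathfrak{c}_6\Delta_5>0$. All Hurwitz determinants are therefore positive, so $\mathbf{p}^*\in\mathcal{S}$.

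\emph{Main obstacle.} The delicate step is making sure that loss of stability can only happen through $\Delta_4$ or $\Delta_5$ vanishing, and that the identity of Prop.~\ref{prop:Hdelta25} prevents $\Delta_4$ from reaching zero first. This is exactly what the argument above handles. The only genuinely computational input is the base point, which I expect to be routine.
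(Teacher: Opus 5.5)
Your proposal is correct and follows essentially the paper's argument: connectedness of $\mathcal{C}^+_6$ from Lemma~\ref{lem:c6pos}, a Hurwitz-stable base point, the identity of Prop.~\ref{prop:Hdelta25} to rule out $\Delta_4=0$ on the boundary of the stable set, and Lemma~\ref{lem:centralhurwitz} to force $\Delta_5>0$ there. The only difference is in framing. You run an open--closed argument using $\Delta_4,\Delta_5\ge 0$ at a limit point, whereas the paper takes the first parameter along a path where stability is lost and uses $\Delta_5=0$, which comes from the purely imaginary pair.
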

\proof
The statement follows combining the two previous Lemma \ref{lem:c6pos} and Lemma \ref{lem:centralhurwitz}. Indeed, indirectly assume that there exists a parameter point $\mathbf{p}_{unst}$ in $\mathcal{C}^+_6$ whose evaluated $G_{\mathrm{red}}$ is not Hurwitz-stable. Let $\mathbf{p}_{st}$ be any parameter point where $G_{\mathrm{red}}$ is Hurwitz stable: the existence of such points can be easily found explicitly (and it is well-known in the literature). Consider a path $p(t)$ connecting $\mathbf{p}_{st}$ to $\mathbf{p}_{unst}$, and let $t^*$ the first point where $G_{\mathrm{red}}(p(t^*))$ loses stability. Since $\mathfrak{c}_6>0$ throughout, such point must identify purely imaginary eigenvalues, and in particular satisfying $\Delta_5=0$. However, since $t^*$ is the first point with a loss of stability, it follows that $\Delta_4>0$ for $t<t^*$ and thus - by continuity - $\Delta_4\ge0$ at $t^*$. The equality \eqref{HurIddelta25} excludes $\Delta_4=0$ under $\Delta_3>0$ (by Prop.~\ref{prop:pbb}), $\Delta_5=0$ (by indirect assumption), and $\mathfrak{c}_6>0$ (by assumption), thus $\Delta_4>0$ at $t^*$, and Lemma \ref{lem:centralhurwitz} leads to contradiction.
\endproof

While I was unable to prove $\mathcal{T}\ge0$ for the 16 general independent parameters $\mathbf{p}$, it is possible to prove the nonnegativity of $\mathcal{T}$ under mass action kinetics. Such arguments are collected in the next section. 

\section{Mass action kinetics}\label{sec:ma}

\subsection{Steady-state constraints}\label{sec:massc}
At a positive steady state, the mass action system reads:
\begin{align}
\kappa_1[\ce{A}][\ce{E}]&=\kappa_2[\ce{U}]+\kappa_{12}[\ce{Y}],\label{A-eq}\tag{A-eq}\\
\kappa_4[\ce{B}][\ce{E}]+\kappa_{10}[\ce{B}][\ce{F}]&=\kappa_3[\ce{U}]+\kappa_{11}[\ce{Y}]
       +\kappa_5[\ce{V}]+\kappa_9[\ce{X}],\label{B-eq}\tag{B-eq}\\
       \kappa_7[\ce{C}][\ce{F}]&=\kappa_6[\ce{V}]+\kappa_8[\ce{X}],\label{C-eq}\tag{C-eq}\\
\kappa_1[\ce{A}][\ce{E}]+\kappa_4[\ce{B}][\ce{E}]&=(\kappa_2+\kappa_3)[\ce{U}]
       +(\kappa_5+\kappa_6)[\ce{V}],\label{K-eq}\tag{E-eq}\\
\kappa_7[\ce{C}][\ce{F}]+\kappa_{10}[\ce{B}][\ce{F}]&=(\kappa_{11}+\kappa_{12})[\ce{Y}]
       +(\kappa_8+\kappa_9)[\ce{X}],\label{F-eq}\tag{F-eq}\\
(\kappa_2+\kappa_3)[\ce{U}]&=\kappa_1[\ce{A}][\ce{E}],\label{U-eq}\tag{U-eq}\\
(\kappa_5+\kappa_6)[\ce{V}]&=\kappa_4[\ce{B}][\ce{E}],\label{V-eq}\tag{V-eq}\\
(\kappa_8+\kappa_9)[\ce{X}]&=\kappa_7[\ce{C}][\ce{F}],\label{X-eq}\tag{X-eq}\\
(\kappa_{11}+\kappa_{12})[\ce{Y}]&=\kappa_{10}[\ce{B}][\ce{F}].\label{Y-eq}\tag{Y-eq}
\end{align}

\begin{prop}[Mass action constraints]\label{lem:maconstaints}
    The symbolic Jacobian $G$ can be realized as the Jacobian matrix of the mass action system \eqref{doubleeqma}, evaluated at a positive steady state, if and only if the parameters satisfy the following two constraints:
        \begin{equation}\label{eq:MA1}\tag{MA1} 
\dfrac{u_3}{u_2+u_3}\frac{b_4}{b_{10}}=\dfrac{y_{12}}{y_{11}+y_{12}}\dfrac{e_4}{e_1},
\end{equation}
\begin{equation}\tag{MA2}\label{eq:MA2}
\dfrac{v_6}{v_5+v_6}\dfrac{b_4}{b_{10}}=\dfrac{x_9}{x_8+x_9}\dfrac{f_7}{f_{10}}.             \end{equation}        
\end{prop}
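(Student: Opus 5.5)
Under mass action every rate is a monomial, so every partial derivative is an explicit product of a rate constant and concentrations. I would write these out:
\[
a=\kappa_1[\ce{E}],\quad e_1=\kappa_1[\ce{A}],\quad b_4=\kappa_4[\ce{E}],\quad e_4=\kappa_4[\ce{B}],\quad b_{10}=\kappa_{10}[\ce{F}],\quad f_{10}=\kappa_{10}[\ce{B}],
\]
\[
c=\kappa_7[\ce{F}],\quad f_7=\kappa_7[\ce{C}],\quad u_j=\kappa_j,\quad v_j=\kappa_j,\quad x_j=\kappa_j,\quad y_j=\kappa_j .
\]
The proof then has two directions, both driven by the steady-state equations.

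\emph{Necessity.} The key step is to reduce the steady-state system to a few independent relations. Using \eqref{U-eq}, \eqref{V-eq}, \eqref{X-eq}, \eqref{Y-eq}, the remaining equations collapse:
\begin{itemize}
\item \eqref{A-eq} becomes $\kappa_3[\ce{U}]=\kappa_{12}[\ce{Y}]$.
\item \eqref{C-eq} becomes $\kappa_6[\ce{V}]=\kappa_9[\ce{X}]$.
\item \eqref{K-eq}, \eqref{F-eq} and \eqref{B-eq} are then implied by these two flux balances.
\end{itemize}
Now I express everything through derivatives. From \eqref{U-eq}, $[\ce{U}]=\kappa_1[\ce{A}][\ce{E}]/(u_2+u_3)$, so the first balance reads
\[
\frac{u_3}{u_2+u_3}\,\kappa_1[\ce{A}][\ce{E}]=\frac{y_{12}}{y_{11}+y_{12}}\,\kappa_{10}[\ce{B}][\ce{F}].
\]
Observe that $\kappa_1[\ce{A}][\ce{E}]=e_1\,b_4/\kappa_4$ and $\kappa_{10}[\ce{B}][\ce{F}]=b_{10}\,e_4/\kappa_4$, since $b_4/\kappa_4=[\ce{E}]$ and $e_4/\kappa_4=[\ce{B}]$. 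Dividing by $e_1 b_{10}/\kappa_4$ gives exactly \eqref{eq:MA1}.

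Symmetrically, $\kappa_6[\ce{V}]=\kappa_9[\ce{X}]$ gives
\[
\frac{v_6}{v_5+v_6}\,\kappa_4[\ce{B}][\ce{E}]=\frac{x_9}{x_8+x_9}\,\kappa_7[\ce{C}][\ce{F}].
\]
Here $\kappa_4[\ce{B}][\ce{E}]=b_4 f_{10}/\kappa_{10}$ and $\kappa_7[\ce{C}][\ce{F}]=f_7 b_{10}/\kappa_{10}$, which yields \eqref{eq:MA2}. This half is just the $\sigma$-image of the first, consistent with \eqref{eq:sigma}.

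\emph{Sufficiency.} Given $\mathbf{p}>0$ satisfying both constraints, I must construct positive $\kappa$'s and concentrations realizing $\mathbf{p}$ at a steady state. The construction runs as follows.
\begin{enumerate}
\item Set $\kappa_j$ equal to the corresponding $u,v,x,y$ symbol for $j\in\{2,3,5,6,8,9,11,12\}$.
\item Choose $[\ce{B}]=1$. This forces $\kappa_4=e_4$ and $\kappa_{10}=f_{10}$, hence $[\ce{E}]=b_4/e_4$ and $[\ce{F}]=b_{10}/f_{10}$.
\item Pick $\kappa_1>0$ freely. Then $[\ce{E}]=a/\kappa_1$ forces $\kappa_1=a e_4/b_4$, and $[\ce{A}]=e_1/\kappa_1$.
\item Likewise $\kappa_7=c f_{10}/b_{10}$ and $[\ce{C}]=f_7/\kappa_7$.
\item Define $[\ce{U}],[\ce{V}],[\ce{X}],[\ce{Y}]$ by \eqref{U-eq}, \eqref{V-eq}, \eqref{X-eq}, \eqref{Y-eq}; all are positive.
\end{enumerate}
Every derivative identity then holds by construction. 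The remaining steady-state equations reduce to the two flux balances, and by the computation above these are equivalent to \eqref{eq:MA1} and \eqref{eq:MA2}.

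The only real obstacle is bookkeeping: verifying that the reduced system is exactly the two flux balances (i.e.\ that \eqref{B-eq}, \eqref{K-eq}, \eqref{F-eq} add nothing), and that the parametrization has no hidden consistency condition. One might worry that $\kappa_1$ is determined twice, by $a$ and by $e_1$. It is not: $a$ fixes $\kappa_1$ through $[\ce{E}]$, and $e_1$ then fixes $[\ce{A}]$, which is free. I would conclude by checking this degree-of-freedom count explicitly.
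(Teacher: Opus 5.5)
Your proposal is correct and follows the paper's proof essentially step by step. For necessity, you use the flux balances $\kappa_3[\ce{U}]=\kappa_{12}[\ce{Y}]$ and $\kappa_6[\ce{V}]=\kappa_9[\ce{X}]$, rewritten via $[\ce{B}]/[\ce{E}]=e_4/b_4$ and its $\sigma$-counterpart $[\ce{B}]/[\ce{F}]=f_{10}/b_{10}$; for sufficiency, you use the same cascade construction normalized at $[\ce{B}]=1$, which gives the same rate constants and concentrations. The only cosmetic difference is how you close sufficiency: you note that the necessity computation is reversible once $[\ce{U}],[\ce{V}],[\ce{X}],[\ce{Y}]$ are defined by their own steady-state equations, with \eqref{B-eq}, \eqref{K-eq}, \eqref{F-eq} then automatic, whereas the paper checks (A-eq$'$), (B-eq$'$), (C-eq$'$) explicitly.
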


\begin{remark}[Sufficiency of Prop.~\ref{lem:maconstaints}]
    Note that, for the purposes of this paper of excluding a bifurcation behavior, only the necessity of \eqref{eq:MA1} and \eqref{eq:MA2} is relevant.
 \end{remark}

\proof
Firstly, we record the partial derivatives at steady state, under mass action:
\begin{equation}\label{eq:pdma}
\begin{alignedat}{4}
a      &= \kappa_1[\ce{E}]  &\qquad
b_4    &= \kappa_4[\ce{E}]  &\qquad
b_{10} &= \kappa_{10}[\ce{F}] &\qquad
c      &= \kappa_7[\ce{F}], \\
e_1    &= \kappa_1[\ce{A}]  &
e_4    &= \kappa_4[\ce{B}]  &
f_7    &= \kappa_7[\ce{C}]  &
f_{10} &= \kappa_{10}[\ce{B}], \\
u_2    &= \kappa_2 &
u_3    &= \kappa_3 &
v_5    &= \kappa_5 &
v_6    &= \kappa_6, \\
x_8    &= \kappa_8 &
x_9    &= \kappa_9 &
y_{11} &= \kappa_{11} &
y_{12} &= \kappa_{12}.
\end{alignedat}
\end{equation}
Necessity: the two constraints are one the $\sigma$-symmetric of the other, so it suffices to prove one. The equations \eqref{A-eq} and \eqref{U-eq} give
\[
    (\kappa_2+\kappa_3)[\ce{U}]=\kappa_2[\ce{U}]+\kappa_{12}[\ce{Y}],
\]
and thus $\kappa_3[\ce{U}]=\kappa_{12}[\ce{Y}]$, i.e. 
\[u_3[\ce{U}]=y_{12}[\ce{Y}].\]
Note that this is exactly the steady state equation for $Q_1$ introduced in \eqref{Qchange}. On the other hand, equations \eqref{U-eq} and \eqref{Y-eq} give
\[
[\ce{U}]=\dfrac{\kappa_1[\ce{A}][\ce{E}]}{\kappa_2+\kappa_3}\qquad\text{and}\qquad
[\ce{Y}]=\dfrac{\kappa_{10}[\ce{B}][\ce{F}]}{\kappa_{11}+\kappa_{12}}.
\]
Jointly, and with the correct substitution, they yield:
\[
u_3 \dfrac{\kappa_1[\ce{A}][\ce{E}]}{\kappa_2+\kappa_3}
=y_{12} \dfrac{\kappa_{10}[\ce{B}][\ce{F}]}{\kappa_{11}+\kappa_{12}},
\]
that is,
\[
u_3 \dfrac{e_1[\ce{E}]}{u_2+u_3}
=y_{12} \dfrac{b_{10}[\ce{B}]}{y_{11}+y_{12}}.
\]
Finally, note that
\[
\dfrac{[\ce{B}]}{[\ce{E}]}
=\frac{\kappa_4[\ce{B}]}{\kappa_4[\ce{E}]}
=\dfrac{e_4}{b_4},
\]
and thus \eqref{eq:MA1} follows. Analogously, \eqref{eq:MA2} follows by performing symmetrically the operations on equations \eqref{C-eq}, \eqref{X-eq}, and \eqref{V-eq}.

Sufficiency: For any choice of the 16 parameters $\mathbf{p}$
 that satisfy
\eqref{eq:MA1} and \eqref{eq:MA2}, we construct a mass action system
which realizes such Jacobian configuration at a positive steady state. Informally,
the logic is as follows. The rates of the monomolecular reactions are
fixed a priori. Then, we may fix $[\overline{\ce{B}}]=1$, arbitrarily, due to rescale
freedom: even if arbitrary, I choose $\ce{B}$ because $\sigma(\ce{B})=\ce{B}$, so that it serves better as an organizing center. $[\overline{\ce{B}}]=1$ in turn fixes $\kappa_4=e_4$ and $\kappa_{10}=f_{10}$, as well as the
steady state concentrations $[\overline{\ce{E}}]=b_4/e_4$ and $[\overline{\ce{F}}]=b_{10}/f_{10}$. The
construction then proceeds by cascade via $\kappa_1=ae_4/b_4$ and
$[\overline{\ce{A}}]=b_4e_1/ae_4$, and so on. One can verify that the constructed
rate constants are as follows:
\begin{equation}
\begin{array}{cccccc}
\kappa_1=\dfrac{ae_4}{b_4}\;
&
\kappa_2=u_2\;
&
\kappa_3=u_3\;
&
\kappa_4=e_4\;
&
\kappa_5=v_5\;
&
\kappa_6=v_6\\[4mm]

\kappa_7=\dfrac{cf_{10}}{b_{10}}\;
&
\kappa_8=x_8\;
&
\kappa_9=x_9\;
&
\kappa_{10}=f_{10}\;
&
\kappa_{11}=y_{11}\;
&
\kappa_{12}=y_{12}
\end{array}
\end{equation}
In turn, the positive steady-state concentrations are
\begin{equation*}
\begin{alignedat}{3}
[\overline{\ce{A}}]
    &= \frac{b_4e_1}{ae_4}
&\qquad
[\overline{\ce{B}}]
    &= 1
&\qquad
[\overline{\ce{C}}]
    &= \frac{b_{10}f_7}{cf_{10}},
\\
[\overline{\ce{E}}]
    &= \frac{b_4}{e_4}
&
[\overline{\ce{F}}]
    &= \frac{b_{10}}{f_{10}}
&
[\overline{\ce{U}}]
    &= \frac{b_4e_1}{e_4(u_2+u_3)},
\\
[\overline{\ce{V}}]
    &= \frac{b_4}{v_5+v_6}
&
[\overline{\ce{X}}]
    &= \frac{b_{10}f_7}{f_{10}(x_8+x_9)}
&
[\overline{\ce{Y}}]
    &= \frac{b_{10}}{y_{11}+y_{12}}.
\end{alignedat}
\end{equation*}
It is straightforward to check that they satisfy \eqref{eq:pdma} and
that, under \eqref{eq:MA1} and \eqref{eq:MA2}, they satisfy the
steady-state equations. Only \eqref{A-eq}, \eqref{B-eq}, \eqref{C-eq}, which relate to the three substrates $\ce{A},\ce{B},\ce{C}$ need to be enforced via \eqref{eq:MA1} and \eqref{eq:MA2}. All the other equations are identically satisfied, and we omit listing them here. The equations  \eqref{A-eq}, \eqref{B-eq}, \eqref{C-eq} read, after simplifications:
\begin{align*}
\frac{b_4e_1}{e_4}
&=
u_2\frac{b_4e_1}{e_4(u_2+u_3)}
+y_{12}\frac{b_{10}}{y_{11}+y_{12}},
\label{A-eq'}\tag{A-eq'}\\
b_{10}+b_4
&=
u_3\frac{b_4e_1}{e_4(u_2+u_3)}
+y_{11}\frac{b_{10}}{y_{11}+y_{12}}
+v_5\frac{b_4}{v_5+v_6}
+x_9\frac{b_{10}f_{7}}{f_{10}(x_8+x_9)},
\label{B-eq'}\tag{B-eq'}\\
\frac{b_{10}f_{7}}{f_{10}}
&=
v_6\frac{b_4}{v_5+v_6}
+x_8\frac{b_{10}f_{7}}{f_{10}(x_8+x_9)},
\label{C-eq'}\tag{C-eq'}\\
\end{align*}
Due to the $\mathbb{Z}_2$ symmetry $\sigma$, enforcing \eqref{eq:MA1} solves \eqref{A-eq'} identically as enforcing \eqref{eq:MA2} solves \eqref{C-eq'}. We exemplify only for \eqref{A-eq'}:
\begin{equation*}
\begin{split}
u_2\frac{b_4e_1}{e_4(u_2+u_3)}
+y_{12}\frac{b_{10}}{y_{11}+y_{12}}
&=
\frac{b_4e_1u_2(y_{11}+y_{12})+b_{10}e_4y_{12}(u_2+u_3)}
     {e_4(u_2+u_3)(y_{11}+y_{12})}
\\
&=
\frac{b_4e_1u_2(y_{11}+y_{12})+b_4e_1u_3(y_{11}+y_{12})}
     {e_4(u_2+u_3)(y_{11}+y_{12})}
=\frac{b_4e_1}{e_4}.
\end{split}
\end{equation*}
For \eqref{B-eq'}, the simultaneous application of  \eqref{eq:MA1} and \eqref{eq:MA2} give
\begin{equation*}
\begin{split}
&u_3\frac{b_4e_1}{e_4(u_2+u_3)}
+y_{11}\frac{b_{10}}{y_{11}+y_{12}}
+v_5\frac{b_4}{v_5+v_6}
+x_9\frac{b_{10}f_{7}}{f_{10}(x_8+x_9)}
\\
&=
(y_{12}+y_{11})\frac{b_{10}}{y_{11}+y_{12}}
+(v_5+v_6)\frac{b_4}{v_5+v_6}
\\
&=b_{10}+b_4.
\end{split}
\end{equation*}
Thus all the steady-state equations are satisfied.
\endproof

\subsection{Steady-state variables}\label{sec:massv}

We introduce here a parametrization for the steady-state Jacobian under mass-action kinetics. We recall from Sec.~\ref{sec:preliminaries} the reduced Jacobian matrix $G_{\mathrm{red}}$ in the form of
\begin{equation*}
    G_{\mathrm{red}}=\begin{pmatrix}
        \mathbf{0}_2 & R\\
        L & -N
    \end{pmatrix},
\end{equation*}
and the diagonal matrix $D=\operatorname{diag}(\delta_1,1,\delta_3,\delta_4)$, with 
\begin{equation*}
    \delta_1:=\dfrac{e_4}{e_1},\qquad \delta_3:=\dfrac{b_4}{b_{10}},\qquad \delta_4:=\dfrac{b_4f_{10}}{b_{10}f_{7}}.
\end{equation*}
such that $DN$ is symmetric. In analogy, we introduce the four positive
ratios
\begin{equation}\label{eq:globalratios}
\xi_1:=\frac{u_3}{u_2},
\qquad
\xi_2:=\frac{y_{12}}{y_{11}},
\qquad
\eta_1:=\frac{x_9}{x_8},
\qquad
\eta_2:=\frac{v_6}{v_5}.
\end{equation}
In terms of these variables, the mass-action constraints \eqref{eq:MA1} and \eqref{eq:MA2} become
\begin{equation}\label{eq:manew}
\frac{\xi_1}{1+\xi_1}\frac{1}{\delta_1}
=
\frac{\xi_2}{1+\xi_2}\frac{1}{\delta_3},
\qquad
\frac{\eta_1}{1+\eta_1}\frac{1}{\delta_4}
=
\frac{\eta_2}{1+\eta_2}.
\end{equation}
Choosing $\delta_3>0$ as an unrestricted parameter, these constraints translate into
\begin{equation}\label{eq:globaldelta}
\delta_1
=
\delta_3
\frac{\xi_1(1+\xi_2)}
     {\xi_2(1+\xi_1)},
\qquad
\delta_4
=
\frac{\eta_1(1+\eta_2)}
     {\eta_2(1+\eta_1)}.
\end{equation}
Further natural variables are introduced:
\begin{equation}
\begin{aligned}
\alpha  &:= \delta_1 a,
&\qquad \tau_1   &:= \delta_1(u_2+u_3),
&\qquad \theta_1 &:= v_5+v_6, \\
\gamma &:= \delta_4 c,
&\qquad \tau_2   &:= \delta_3(y_{11}+y_{12}),
&\qquad \theta_2 &:= \delta_4(x_8+x_9),\\
& \qquad &\epsilon&:=\delta_3f_{10},
\end{aligned}
\end{equation}
 which - jointly with the untouched variables $e_4$ and $b_4$ - makes $DN$ and $DL$ read as follows:
 {\small
 \[DN= 
         \begin{pmatrix}
          \alpha+e_4+\tau_1 & e_4 & 0 & 0\\
            e_4 & b_4+e_4+\theta_1& b_4 &0\\
             0 & b_4 & b_4+\epsilon+\tau_2 & \epsilon\\
             0 & 0 & \epsilon &\epsilon+ \gamma+\theta_2
         \end{pmatrix} \qquad \text{and} \qquad DL=\begin{pmatrix}
    \alpha & 0\\
    -b_4 & -b_4\\
    -b_4 & -b_4\\
    0 & \gamma
\end{pmatrix}.\]}
\hspace{-0.25cm}
Moreover, since in this new variables \eqref{eq:manew} implies
\[
v_6=\frac{\eta_2}{1+\eta_2}\theta_1,
\qquad
x_9=\frac{\eta_2}{1+\eta_2}\theta_2,
\qquad
y_{12}=\frac{\xi_2}{\delta_3(1+\xi_2)}\tau_2,
\qquad
u_3=\frac{\xi_2}{\delta_3(1+\xi_2)}\tau_1,
\]
we have
\[
R=
\begin{pmatrix}
-\dfrac{\xi_2}{\delta_3(1+\xi_2)}\tau_1
&
0
&
\dfrac{\xi_2}{\delta_3(1+\xi_2)}\tau_2
&
0
\\[3mm]
0
&
\dfrac{\eta_2}{1+\eta_2}\theta_1
&
0
&
-\dfrac{\eta_2}{1+\eta_2}\theta_2
\end{pmatrix}.
\]
In conclusion, the 14 independent positive \textbf{mass-action steady-state variables} are:
\begin{equation}\label{massv}
\pmb{\mu}:=(\xi_1,\xi_2,\eta_1,\eta_2,
\delta_3,\alpha,\gamma,\epsilon,
\tau_1,\tau_2,\theta_1,\theta_2,e_4,b_4).
\end{equation}
Those variables globally describe the steady-state mass-action Jacobian space. Indeed, the inverse formulas through \eqref{eq:globaldelta} read ($e_4$ and $b_4$ remain unchanged):
\begin{equation*}
\begin{aligned}
a&=\frac{\alpha}{\delta_1},
&\qquad e_1&=\frac{e_4}{\delta_1},
&\qquad b_{10}&=\frac{b_4}{\delta_3},
&\qquad f_{10}&=\frac{\epsilon}{\delta_3},\\
c&=\frac{\gamma}{\delta_4},
& f_7&=\frac{\epsilon}{\delta_4},
& u_2&=\frac{\tau_1}{\delta_1(1+\xi_1)},
& u_3&=\frac{\xi_1\tau_1}{\delta_1(1+\xi_1)},\\
y_{11}&=\frac{\tau_2}{\delta_3(1+\xi_2)},
& y_{12}&=\frac{\xi_2\tau_2}{\delta_3(1+\xi_2)},
& v_5&=\frac{\theta_1}{1+\eta_2},
& v_6&=\frac{\eta_2\theta_1}{1+\eta_2},\\
x_8&=\frac{\theta_2}{\delta_4(1+\eta_1)},
& x_9&=\frac{\eta_1\theta_2}{\delta_4(1+\eta_1)},
\end{aligned}
\end{equation*}
and show that the change of variables is an analytic bijection between $\mathbf{p}=\mathbb{R}^{16}_{>0}$ under constraints \eqref{eq:MA1} and \eqref{eq:MA2} and $\pmb{\mu}=\mathbb{R}^{14}_{>0}$.

\subsection{A positivity certificate for $\mathcal{T}\ge0$}\label{sec:positivitycertificate}

We start with a polynomialization Lemma, where we clear the expression $\mathcal{T}$ in the variables \eqref{massv} from appearing denominators.

\begin{lemma}[Polynomialization Lemma]\label{lem:hom}
Let $\widetilde{\mathcal{T}}$ indicate the quantity $\mathcal{T}$, \eqref{eq:star}, expressed in the mass-action steady-state variables $\pmb{\mu}$, \eqref{massv}. It holds that 
\begin{equation}\label{eq:Gdefinition}
\widetilde{\mathcal{T}}^* 
:=
\delta_3^{12}\xi_1^5\eta_1^5
(1+\xi_2)^9(1+\eta_2)^9
\widetilde{\mathcal T}
\end{equation}
is a polynomial in the variables $\pmb{\mu}$.
\end{lemma}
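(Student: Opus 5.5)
The plan is to show that, after substituting the inverse formulas of Sec.~\ref{sec:massv} into $\mathcal{T}$, every one of the sixteen symbols $\mathbf{p}$ becomes a monomial in $\pmb{\mu}$ divided by a product of at most first powers of only five denominator factors: $\delta_3$, $\xi_1$, $\eta_1$, $(1+\xi_2)$, $(1+\eta_2)$. Then $\widetilde{\mathcal{T}}$ is a polynomial in $\pmb{\mu}$ divided by powers of these five factors. It then suffices to bound, for each factor, the maximal total degree with which the corresponding group of symbols can occur in a single monomial of $\mathcal{T}$. Those bounds should be $12,5,5,9,9$.

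\emph{Step 1: rewrite the inverse formulas.} Substituting \eqref{eq:globaldelta} gives
\[
\frac{1}{\delta_1}=\frac{\xi_2(1+\xi_1)}{\delta_3\,\xi_1(1+\xi_2)}, \qquad \frac{1}{\delta_4}=\frac{\eta_2(1+\eta_1)}{\eta_1(1+\eta_2)}.
\]
In every inverse formula the factors $(1+\xi_1)$ and $(1+\eta_1)$ either cancel or appear only in numerators. For instance, $u_2=\xi_2\tau_1/(\delta_3\xi_1(1+\xi_2))$, $u_3=\xi_2\tau_1/(\delta_3(1+\xi_2))$, $x_8=\eta_2\theta_2/(\eta_1(1+\eta_2))$ and $x_9=\eta_2\theta_2/(1+\eta_2)$. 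I will tabulate, for each of the sixteen symbols, its numerator monomial and its denominator. The resulting denominator groups are:
\begin{itemize}
\item $\delta_3$: $a,e_1,u_2,u_3,b_{10},f_{10},y_{11},y_{12}$;
\item $\xi_1$: $a,e_1,u_2$;
\item $\eta_1$: $c,f_7,x_8$;
\item $(1+\xi_2)$: $a,e_1,u_2,u_3,y_{11},y_{12}$;
\item $(1+\eta_2)$: $c,f_7,x_8,x_9,v_5,v_6$.
\end{itemize}
Each symbol carries each factor at most to the first power.

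\emph{Step 2: degree bounds.} The quantity $\mathcal{T}=P_5\Delta_4-P_6B_1^+-N_5B_2^+$ is homogeneous of degree $15$ in $\mathbf{p}$: the terms have degrees $5+10$, $6+9$ and $5+10$ respectively. It is a polynomial in $\mathbf{p}$, so clearing denominators amounts to bounding, for each group above, the maximal partial degree of $\mathcal{T}$ in the variables of that group.

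I would derive these bounds structurally, via the Cauchy--Binet multilinearity of the $\mathfrak{c}_i$. The symbols $a,e_1,u_2$ enter $G_{\mathrm{red}}$ only through its third row, so every monomial of each $\mathfrak{c}_i$ has degree at most one in $\{a,e_1,u_2\}$. Since $\mathcal{T}$ is a polynomial of total degree $5$ in the $\mathfrak{c}_i$ (and in $P_5,P_6,N_5$, which are sub-sums of them), the degree in $\{a,e_1,u_2\}$ is at most $5$. The same argument, applied to the last row, bounds the degree in $\{c,f_7,x_8\}$ by $5$. For the groups attached to $\delta_3$, $(1+\xi_2)$ and $(1+\eta_2)$, each group is spread over at most two rows of $G_{\mathrm{red}}$. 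This gives at most degree $2$ per $\mathfrak{c}_i$, and hence a crude bound of $10$. That crude bound already exceeds $9$ for the $(1+\xi_2)$ and $(1+\eta_2)$ groups, and is below the claimed $12$ for $\delta_3$, which suggests a mismatch. The bound therefore has to be refined by using the actual factor structure of $P_5$, $N_5$ and $B_2^+$, and the $\delta_3$ count has to be rechecked.

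\emph{Step 3: main obstacle and fallback.} The main obstacle is establishing the exact exponents $12,9,9$, because the crude row-counting is not sharp. If the refined row-counting does not close, I would do what the rest of the paper does for such claims. I would substitute the inverse formulas symbolically in \texttt{MATLAB}, multiply $\widetilde{\mathcal{T}}$ by the stated prefactor, and verify by simplification (e.g.\ \texttt{numden}) that the resulting denominator is identically $1$. Equivalently, I would compute, for each group, the maximal partial degree of $\mathcal{T}$ from its monomial expansion. This is a finite exact check and constitutes the proof. Any excess exponent in the prefactor is harmless, so only the upper bounds matter.
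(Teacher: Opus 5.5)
\textbf{The gap is in the reduction of Step 2.} Your Step 1 is correct, and so is your treatment of the $\xi_1$ and $\eta_1$ exponents: rows $3$ and $6$ only, hence at most $5$. The problem is that you bound the partial degree of $\mathcal{T}$, as a polynomial in $\mathbf{p}$, in each group of symbols. That is only a sufficient condition, and for $(1+\xi_2)$ and $(1+\eta_2)$ it cannot give $9$.

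Your row count is also wrong. Each of the three remaining groups sits in three rows of $G_{\mathrm{red}}$:
\begin{itemize}
\item the $\delta_3$- and $(1+\xi_2)$-groups sit in rows $1,3,5$, because $y_{11},y_{12}$ are on the diagonal of row $5$;
\item the $(1+\eta_2)$-group sits in rows $2,4,6$, because $v_5,v_6$ are on the diagonal of row $4$.
\end{itemize}

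More seriously, grade $\mathcal{T}$ by degree in $\{a,e_1,u_2,u_3,y_{11},y_{12}\}$. For $i\ge 3$, the top part of $\mathfrak{c}_i$ is $\mathfrak{c}_i^{(3)}=(y_{11}+y_{12})E_{i-1}$, where $E_{i-1}$ is the sum of principal minors of $-G_{\mathrm{red}}$ with row and column $5$ deleted that contain the indices $1,3$. For example, $\mathfrak{c}_3^{(3)}=(y_{11}+y_{12})au_3$. Since $N_5$ and $N_6$ have degree at most $2$ in this group, the degree-$12$ component of $\mathcal{T}$ is
\[
\mathfrak{c}_3^{(3)}\Delta_2^{(3)}\bigl(\mathfrak{c}_5^{(3)}\mathfrak{c}_4^{(3)}-\mathfrak{c}_6^{(3)}\mathfrak{c}_3^{(3)}\bigr),
\]
and a direct computation shows it is a nonzero polynomial with positive coefficients. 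The same holds for $(1+\eta_2)$ by $\sigma$.

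Consequences for your plan:
\begin{itemize}
\item No refinement of the monomial count can reach $9$.
\item Your ``equivalent'' fallback (maximal partial degrees from the monomial expansion) would return $12$, not $9$.
\item Only your \texttt{numden} check actually proves the lemma. That check is in substance the paper's own computer verification. The paper runs it on the coefficients $\mathfrak{a}_i=\delta_1\delta_3\delta_4\mathfrak{c}_i$ of the quadratic eigenvalue problem and uses the degree-$5$ homogeneity of $\mathcal{T}$ in the coefficients (keeping $\mathfrak{c}_0$).
\end{itemize}

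\textbf{The missing idea.} In row $5$, $y_{11}$ and $y_{12}$ enter only through $y_{11}+y_{12}=\tau_2/\delta_3$. In row $4$, $v_5$ and $v_6$ enter only through $v_5+v_6=\theta_1$. Neither sum carries a $(1+\xi_2)$ or $(1+\eta_2)$ denominator. So do the count on the entries of $G_{\mathrm{red}}$ after substitution, not on monomials in $\mathbf{p}$. There, each entry carries each denominator factor at most once, with the following row locations:
\begin{itemize}
\item $\delta_3^{-1}$ in rows $1,3,5$;
\item $\xi_1^{-1}$ only in row $3$;
\item $\eta_1^{-1}$ only in row $6$;
\item $(1+\xi_2)^{-1}$ only in rows $1,3$;
\item $(1+\eta_2)^{-1}$ only in rows $2,6$.
\end{itemize}

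A principal minor therefore carries at most as many powers of a factor as it contains rows carrying that factor. Hence $\mathfrak{c}_i$ carries at most $d_i=\min(i,r)$, with $d_0=0$ and $r=3,1,1,2,2$ respectively. Inspection of \eqref{eq:N56} shows that $N_5$ and $N_6$ obey the same bounds, so $P_5$ and $P_6$ do too.

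Now propagate these bounds through the Hurwitz expressions. For $r=3$ one gets $\Delta_2\le 3$, $\Delta_3\le 6$, $\Delta_4\le 9$, $B_1^+\le 9$ and $B_2^+\le 8$. For $r=2$ one gets $3,5,7,7,7$. This yields exactly the exponents $12,5,5,9,9$ for $\mathcal{T}$. Done this way, you get a computer-free proof of the lemma, which the paper's route does not provide.
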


\begin{proof}
    We recall that $\delta_1\delta_3\delta_4=\det D$, and the equality \eqref{eq:qep}, namely
    \begin{equation*}
        \det D \; g_{\mathrm{red}}(\lambda)\;\lambda^2 = \det(D\lambda^2+DN\lambda-DLR).
\end{equation*}        
By expanding both sides we have that
\begin{equation*}
\begin{split}
\delta_1\delta_3\delta_4\; (\mathfrak{c}_0\lambda^6+\mathfrak{c}_1 \lambda^5+\mathfrak{c}_2\lambda^4+\mathfrak{c}_3\lambda^3+\mathfrak{c}_4\lambda^2+\mathfrak{c}_5\lambda+\mathfrak{c}_6)\;\lambda^2=\\
(\mathfrak{a}_0\lambda^8+\mathfrak{a}_1 \lambda^7+\mathfrak{a}_2\lambda^6+\mathfrak{a}_3\lambda^5+\mathfrak{a}_4\lambda^4+\mathfrak{a}_5\lambda^3+\mathfrak{a}_6\lambda^2+\mathfrak{a}_7\lambda+\mathfrak{a}_8),
\end{split}
\end{equation*}
from which - by explicit comparison of coefficients we get that $\mathfrak{a}_8=\mathfrak{a}_7=0$ and that \begin{equation}\label{eq:ciai}
       \delta_1\delta_3\delta_4 \mathfrak{c}_i=\mathfrak{a}_i\qquad\qquad i=0,...,6. 
    \end{equation}

Now we note that $\Delta_5$ is homogeneous of degree 5 in the $\mathfrak{c}_i$ coefficients, and $\mathfrak{c}_6 \mathcal{T}$ is obtained from $P_6 \Delta_5$ in \eqref{eq:TfromDelta5} by subtraction of homogeneous terms of degree 6, so that $\mathfrak{c}_6\mathcal{T}$ is homogeneous of degree 6. Since $\mathfrak{c}_6=P_6-N_6$ is homogeneous of degree 1 then $\mathcal{T}$ is homogeneous of degree 5. Hence, using notation $\widetilde{\mathcal{T}}(\mathbf{c})$ for the symbolic expression built from the $\mathfrak{c}_i$ coefficients and $\widetilde{\mathcal{T}}(\mathbf{a})$ for the same expression but built from the $\mathfrak{a}_i$ coefficients, \eqref{eq:ciai} yields
\begin{equation}\label{eq:homodelta}
\widetilde{\mathcal{T}}(\mathbf{a})=\widetilde{\mathcal{T}}(\delta_1\delta_3\delta_4\mathbf{c})=(\delta_1\delta_3\delta_4)^5\widetilde{\mathcal{T}}.
\end{equation}
In turn, $\tilde{\mathcal{T}}(\mathbf{a})$ is itself a rational function and an explicit computation shows that the correct denominator clearance is by multiplication with a factor $\delta_3^2
\xi_2^4\eta_2^4
(1+\xi_1)^5(1+\eta_1)^5
(1+\xi_2)^4(1+\eta_2)^4$. This is performed in the attached \texttt{MATLAB} script \texttt{dualfutile\_ma\_nohopf.m}. In particular,
\[\delta_3^2
\xi_2^5\eta_2^5
(1+\xi_1)^5(1+\eta_1)^5
(1+\xi_2)^4(1+\eta_2)^4 \tilde{\mathcal{T}}(\mathbf{a})\]
is a polynomial in the mass action variables $\pmb{\mu}$. Note that the exponents of $\xi_2$ and $\eta_2$ are chosen to be 5, and not the minimal 4: this is because substituting \eqref{eq:globaldelta} provides denominators of degree $5$ in  $\xi_2,\eta_2$. We compute indeed:
\begin{equation}\label{eq:518}
\begin{split}
&\delta_3^2
\xi_2^5\eta_2^5
(1+\xi_1)^5(1+\eta_1)^5
(1+\xi_2)^4(1+\eta_2)^4 \tilde{\mathcal{T}}(\mathbf{a})\\=&\delta_3^2
\xi_2^5\eta_2^5
(1+\xi_1)^5(1+\eta_1)^5
(1+\xi_2)^4(1+\eta_2)^4(\delta_1\delta_3\delta_4)^5\widetilde{\mathcal{T}}\\
=&\delta_3^2
\xi_2^5\eta_2^5
(1+\xi_1)^5(1+\eta_1)^5
(1+\xi_2)^4(1+\eta_2)^4\bigg(\dfrac{\xi_1(1+\xi_2)}{\xi_2(1+\xi_1)}\delta_3^2\dfrac{\eta_1(1+\eta_2)}
{\eta_2(1+\eta_1)}\bigg)^5\widetilde{\mathcal{T}}\\
=&\delta^{12}_3 \xi_1^5\eta_1^5(1+\xi_2)^9(1+\eta_2)^9\tilde{\mathcal{T}}\quad=:\tilde{\mathcal{T}}^*,
\end{split}
\end{equation}
which is a polynomial in the variables $\pmb{\mu}$.
\end{proof}
Trivially, $\delta^{12}_3 \xi_1^5\eta_1^5(1+\xi_2)^9(1+\eta_2)^9>0$ yields 
\begin{equation*}
    \mathcal{T}\ge0\qquad \Longleftrightarrow\qquad  \tilde{\mathcal{T}}^*\ge0,
\end{equation*}
and thus Lemma \ref{lem:hom} justifies seeking a positivity certificate for $\widetilde{\mathcal{T}}^*\ge0$, which is what we present next.

\begin{prop}[Positivity of $\widetilde{\mathcal{T}}^*$]\label{prop:poscertificate} For the polynomial $\tilde{\mathcal{T}}^*$ in the variables \eqref{massv} it holds
\[\tilde{\mathcal{T}}^*>0.\]
Consequently, $\mathcal{T}>0$, as defined in \eqref{eq:star}.
\end{prop}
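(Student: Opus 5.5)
The plan is to give an explicit, machine-checkable certificate for $\widetilde{\mathcal{T}}^*>0$ on $\mathbb{R}^{14}_{>0}$. The natural first attempt is to expand $\widetilde{\mathcal{T}}^*$ in the variables $\pmb{\mu}$ of \eqref{massv} and show that every monomial has a nonnegative coefficient, with at least one positive. Positivity on the open orthant would then be immediate.

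The variables were chosen with this in mind:
\begin{itemize}
\item $DN$ and $DL$ have entries that are sums of the positive variables $\alpha,\gamma,\epsilon,\tau_i,\theta_i,e_4,b_4$ and of $\delta_3$;
\item $R$ carries the factors $\xi_2/(1+\xi_2)$ and $\eta_2/(1+\eta_2)$, whose denominators are already cleared in \eqref{eq:Gdefinition}.
\end{itemize}
So I would compute the $\mathfrak{a}_i$ directly as coefficients of $\det(\lambda^2D+\lambda DN-DLR)$, and assemble $\widetilde{\mathcal{T}}(\mathbf{a})$ from $P_5,N_5,P_6$ and the Hurwitz expressions. For this I would use the homogeneity relation \eqref{eq:homodelta} and the splitting of $\mathfrak{a}_5,\mathfrak{a}_6$ into positive and negative parts, which is inherited from $P_5,N_5,P_6,N_6$. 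Then I would multiply by the clearing factor and expand.

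If the naive expansion still shows negative coefficients, I would try the following changes of variables before anything else.
\begin{itemize}
\item \emph{Eliminate $1+\xi_i$ and $1+\eta_i$.} The terms $1+\xi_i$ and $1+\eta_i$ create cancellations. Substituting $\xi_i=s_i/(1-s_i)$ does not help, so instead I would keep the $\xi$'s but write $\delta_1$ and $\delta_4$ from \eqref{eq:globaldelta} through auxiliary positive variables. For example, set $\rho_1:=\xi_1(1+\xi_2)$ and $\rho_2:=\xi_2(1+\xi_1)$, so that $\delta_1=\delta_3\rho_1/\rho_2$.
\item \emph{Group monomials.} Collect monomials by their degree in the ``unstable core'' variables $v_6y_{12}$, that is, in $\xi_2,\eta_2$ and $\theta_1,\tau_2$. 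Each negative monomial should then be dominated, via AM--GM on pairs or triples of positive monomials, by positive monomials in its Newton-polytope neighbourhood.
\end{itemize}
Concretely, the certificate would be a list of inequalities of the form $\sum_j \lambda_j m_j \ge m$, with $\lambda_j\ge0$ and the exponent of $m$ a convex combination of the exponents of the $m_j$ (a SONC/AM--GM certificate). The list would be verified coefficient-by-coefficient by script.

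The main obstacle is sheer size. $\widetilde{\mathcal{T}}^*$ is a degree-$\ge 11$ polynomial in 14 variables with likely millions of monomials, so finding which positive monomials cover each negative one cannot be done by hand. I would first exploit the symmetry $\sigma$ (which swaps $\xi\leftrightarrow\eta$, $\alpha\leftrightarrow\gamma$, $\tau_1\leftrightarrow\theta_2$, and so on) to halve the work. I would also use multihomogeneity in suitable weight vectors to split $\widetilde{\mathcal{T}}^*$ into independent graded pieces, each certified separately.

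Strict positivity then follows because the pure positive part, for instance the term coming from $P_5\Delta_4$ at $\xi_2,\eta_2\to0$, is a nonzero polynomial with positive coefficients, and the certificate leaves a strictly positive remainder. Finally $\mathcal{T}>0$ follows from the positive prefactor in Lemma~\ref{lem:hom}.
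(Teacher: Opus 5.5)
Your setup (computing the $\mathfrak a_i$ from the quadratic eigenvalue problem and clearing denominators as in Lemma~\ref{lem:hom}) matches the paper. Beyond that, however, the proposal is a search strategy rather than a proof. The step that carries all the content, an explicit certificate dominating the negative monomials, is never produced, and you give no reason it exists.

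Your first step cannot succeed. $\widetilde{\mathcal T}^*$ does have negative coefficients in $\pmb\mu$, which is exactly why the paper needs the correction terms $\Gamma_1\mathcal E+\Gamma_2\mathcal E^\sigma$.

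Your fallback, a monomial AM--GM/SONC certificate, is not automatic here. Write $S_1=\alpha+\tau_1+\theta_1$ and $S_2=\tau_2+\gamma+\theta_2$. In the paper's certificate, the block $e_4\epsilon\,\Sigma^{(e_4\epsilon)}$ with $\Sigma^{(e_4\epsilon)}=2S_1^2+2S_2^2-2S_1S_2$ admits no such certificate on its own:
\begin{itemize}
\item a negative monomial $-2x_iy_j$, with $x_i\in\{\alpha,\tau_1,\theta_1\}$ and $y_j\in\{\tau_2,\gamma,\theta_2\}$, can only be enclosed by the exponents of $x_i^2$ and $y_j^2$;
\item every AM--GM inequality consumes at least as much positive weight as it cancels (evaluate at $\mathbf 1$);
\item the nine negative monomials carry total weight $18$, against $12$ for the six squares.
\end{itemize}
The same holds for $\mathcal E$ as a whole, since the $(e_4,\epsilon)$-exponent $(1,1)$ of these monomials is a vertex of the exponents $(1,0),(0,1),(1,1)$ occurring in $\mathcal E$. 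The positivity of this block is a sum-of-squares fact in the aggregated linear forms $S_1,S_2$. A term-by-term circuit search would therefore have to borrow weight from elsewhere in $\widetilde{\mathcal T}^*$, and nothing in your plan guarantees that this is possible.

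Your suggested remedies also target the wrong variables.
\begin{itemize}
\item The denominators in $\xi_i,\eta_i$ are already cleared by Lemma~\ref{lem:hom}, and introducing $\rho_1,\rho_2$ alongside the $\xi_i$ only adds dependent variables.
\item In the paper's decomposition, the entire dependence of the negative part on $\xi_i,\eta_i,\delta_3$ sits in the positive prefactors $\Gamma_1,\Gamma_2$.
\item The sign problem lives in the rate variables $\alpha,\tau_1,\theta_1,\tau_2,\gamma,\theta_2,e_4,\epsilon$.
\end{itemize}

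The missing idea is the paper's structured decomposition
\[
\widetilde{\mathcal T}^*=\Sigma^++\Gamma_1\mathcal E+\Gamma_2\mathcal E^\sigma .
\]
Its features are:
\begin{itemize}
\item only the coefficientwise positivity of $\Sigma^+$ is delegated to the computer;
\item $\Gamma_1,\Gamma_2$ are explicit monomials times polynomials in $\xi_i,\eta_i$ with positive coefficients;
\item all negative monomials are confined to the quartic $\mathcal E$ and its $\sigma$-image;
\item $\mathcal E$ has only $39$ negative terms and depends on the rate variables only through $e_4,\epsilon,\tau_2,\theta_1,\alpha+\tau_1,\gamma+\theta_2$.
\end{itemize}
Splitting $\mathcal E=e_4\Sigma^{(e_4)}+\epsilon\Sigma^{(\epsilon)}+e_4\epsilon\Sigma^{(e_4\epsilon)}$, each piece is positive by hand:
\begin{itemize}
\item $\Sigma^{(e_4)}>\frac18 S_2\bigl[(4S_2-S_1)^2+47S_1^2\bigr]$;
\item $\Sigma^{(\epsilon)}$ is a quadratic in $S_2$ with positive leading coefficient and negative discriminant;
\item $\Sigma^{(e_4\epsilon)}=S_1^2+S_2^2+(S_1-S_2)^2$.
\end{itemize}
Strict positivity then follows from $\Gamma_i>0$ and $\mathcal E,\mathcal E^\sigma>0$. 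Without this factorization, or some other explicit and checkable certificate, your argument reduces the proposition to an unverified computational hope.
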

\begin{proof}
By exact computer-assisted polynomial expansion and coefficient collection, $\tilde{\mathcal{T}}^*>0$ can be written as
\begin{equation*}
\widetilde{\mathcal{T}}^*=\Sigma^+ +\Gamma_1 \mathcal{E} +\Gamma_2  \mathcal{E}^\sigma,
\end{equation*}
where $\Sigma^+>0$ is a polynomial in positive variables with only positive coefficients (verified in the attached \texttt{MATLAB} implementation), and $\Gamma_1,\Gamma_2>0$ since their explicit form is
\[
\begin{cases}
 \Gamma_1=\delta_3^4\alpha\tau_1\gamma\theta_2e_4^4\epsilon^3\xi_2^6\eta_1\eta_2^5
\Big[
(1+\xi_2)^3(1+\eta_2)^4
+\xi_2^3\eta_2^3
\bigl(
\eta_1(1+\eta_2)+\xi_1\eta_2
\bigr)
\Big]>0\\
\\
\Gamma_2=\delta_3^5\alpha\tau_1\gamma\theta_2e_4^3\epsilon^4\xi_1\xi_2^5\eta_2^6
\Big[
(1+\xi_2)^4(1+\eta_2)^3
+\xi_2^3\eta_2^3
\bigl(
\xi_1(1+\xi_2)+\xi_2\eta_1
\bigr)
\Big]>0
\end{cases}
\]
Thus, positivity of $\tilde{\mathcal{T}}^*$ reduces to prove positivity of $\mathcal{E}$ and $\mathcal{E}^\sigma$. Since - as we will specify below - they are related by the symmetry $\sigma$, it suffices proving positivity of $\mathcal{E}$. Explicitly, $\mathcal{E}$ reads as follows. \[
\begin{aligned}
\mathcal{E}=
\Bigg[
\
&e_4\Bigg(
2(\tau_2+\gamma+\theta_2)^3
+4\tau_2(\gamma+\theta_2)(\tau_2+\gamma+\theta_2)
\\
&\qquad
+(\alpha+\tau_1+\theta_1)\tau_2^2
+6(\alpha+\tau_1+\theta_1)^2
   (2\tau_2+\gamma+\theta_2)
\\
&\qquad
-(\alpha+\tau_1+\theta_1)
\bigl(
\gamma^2+\theta_2^2
+2\tau_2\gamma+2\tau_2\theta_2+2\gamma\theta_2
\bigr)
\Bigg)
\\[1mm]
&+\epsilon\Bigg(
\bigl(9(\alpha+\tau_1)+16\theta_1\bigr)
(\tau_2+\gamma+\theta_2)^2
\\
&\qquad
+\theta_1^2(\tau_2+\gamma+\theta_2)
+4(\alpha+\tau_1)^3
+14(\alpha+\tau_1)^2\theta_1
\\
&\qquad
+12(\alpha+\tau_1)\theta_1^2
+2\theta_1^3
\\
&\qquad
-\bigl(
3(\alpha+\tau_1)^2
+4(\alpha+\tau_1)\theta_1
\bigr)
(\tau_2+\gamma+\theta_2)
\Bigg)
\\[1mm]
&+e_4\epsilon
\Bigg(
2(\alpha+\tau_1+\theta_1)^2
+2(\tau_2+\gamma+\theta_2)^2
\\
&\hspace{42mm}
-2(\alpha+\tau_1+\theta_1)
 (\tau_2+\gamma+\theta_2)
\Bigg)
\Bigg].
\end{aligned}
\tag{$\mathcal{E}$}
\label{C1}
\]
There are only $39$ monomials with negative coefficient, which are exactly the expansion of the three
negative products $-(...)(...)$ above, which contain respectively $15+15+9$ monomials. We rewrite 
\begin{equation*}
\mathcal{E}=\bigg(e_4 \Sigma^{(e_4)} +\epsilon \Sigma^{(\epsilon)}+e_4\epsilon \Sigma^{(e_4\epsilon)}\bigg).
\end{equation*}
Clearly, nonnegativity of all $\Sigma^{(e_4)}$, $\Sigma^{(\epsilon)}$, and $\Sigma^{(e_4\epsilon)}$ implies nonnegativity of $\mathcal{E}$. This is what we show now.\\

\emph{Nonnegativity of $\Sigma^{(e_4)}$.} We expand as a follows:\\
\begin{equation*}\begin{split}\Sigma^{(e_4)}=&(\tau_2+\gamma+\theta_2)[
2(\tau_2+\gamma+\theta_2)^2-(\alpha+\tau_1+\theta_1)(\tau_2+\gamma+\theta_2)+6(\alpha+\tau_1+\theta_1)^2]\\&+
4\tau_2(\gamma+\theta_2)(\tau_2+\gamma+\theta_2)+6(\alpha+\tau_1+\theta_1)^2\tau_2+2(\alpha+\tau_1+\theta_1)\tau_2^2\\
>&(\tau_2+\gamma+\theta_2)[
2(\tau_2+\gamma+\theta_2)^2-(\alpha+\tau_1+\theta_1)(\tau_2+\gamma+\theta_2)+6(\alpha+\tau_1+\theta_1)^2]\\
=&(\tau_2+\gamma+\theta_2)\frac{1}{8}\bigg[[4(\tau_2+\gamma+\theta_2)-(\alpha+\tau_1+\theta_1)]^2+47(\alpha+\tau_1+\theta_1)^2\bigg]>0.\end{split}
\end{equation*}

\emph{Nonnegativity of $\Sigma^{(\epsilon)}$.}
We view $\Sigma^{(\epsilon)}$ as a quadratic polynomial in
$\tau_2+\gamma+\theta_2$: 
\[
\begin{aligned}
\Sigma^{(\epsilon)}
={}&
\bigl(9(\alpha+\tau_1)+16\theta_1\bigr)
(\tau_2+\gamma+\theta_2)^2
\\
&+
\bigl[
\theta_1^2
-4\theta_1(\alpha+\tau_1)
-3(\alpha+\tau_1)^2
\bigr]
(\tau_2+\gamma+\theta_2)
\\
&+
4(\alpha+\tau_1)^3
+14(\alpha+\tau_1)^2\theta_1
+12(\alpha+\tau_1)\theta_1^2
+2\theta_1^3.
\end{aligned}
\]
Its leading coefficient $9(\alpha+\tau_1)+16\theta_1$ is positive. Moreover, its discriminant is
\[
\begin{aligned}
&
\bigl[
\theta_1^2
-4\theta_1(\alpha+\tau_1)
-3(\alpha+\tau_1)^2
\bigr]^2
\\
&\quad
-4\bigl(9(\alpha+\tau_1)+16\theta_1\bigr)
\Bigl[
4(\alpha+\tau_1)^3
+14(\alpha+\tau_1)^2\theta_1
+12(\alpha+\tau_1)\theta_1^2
+2\theta_1^3
\Bigr]
\\
&=
-\Bigl[
135(\alpha+\tau_1)^4
+736\theta_1(\alpha+\tau_1)^3
+1318\theta_1^2(\alpha+\tau_1)^2
\\
&\hspace{25mm}
+848\theta_1^3(\alpha+\tau_1)
+127\theta_1^4
\Bigr]
<0.
\end{aligned}
\]
Hence the quadratic has no real roots and, since its leading
coefficient is positive, $\Sigma^{(\epsilon)}>0.$

\emph{Nonnegativity of $\Sigma^{(e_4 \epsilon)}$.} We expand as a sum of squares:
$$\Sigma^{(e_4 \epsilon)}=(\alpha+\tau_1+\theta_1)^2
+(\tau_2+\gamma+\theta_2)^2+
[(\alpha+\tau_1+\theta_1)-
(\tau_2+\gamma+\theta_2)]^2>0.$$\\
$\mathcal{E}^\sigma
$ follows by noting that its expression is obtained from \eqref{C1} by $\sigma$-inherited bijection
\[
\alpha\leftrightarrow\gamma,\qquad
\tau_1\leftrightarrow\theta_2,\qquad
\theta_1\leftrightarrow\tau_2,\qquad
e_4\leftrightarrow\epsilon,
\]
which implies its positivity via identical arguments as above.
\end{proof}

\begin{remark}[Symmetry $\sigma$ in the mass-action variables $\pmb{\mu}$]
Our choice of steady-state variables \eqref{massv} does not fully respect the structural $\sigma$-symmetry in arbitrarily fixing $\delta_2=1$ and using $\delta_3$ as a free variable. This explains the slight apparent asymmetry between $\Gamma_1$
 and $\Gamma_2$, even though positivity of $\mathcal{E}$ corresponds via $\sigma$ to positivity of $\mathcal{E}^{\sigma}$.
 \end{remark}
 
\subsection{Hopf exclusion for mass action kinetics}\label{sec:exclusion}

To close the argument, we still need two technical steps. The first step is checking that the fourth Hurwitz determinant $\Delta_4$ is positive under mass action.
\begin{prop}\label{prop:delta4pos}
  Under the mass action constraints \eqref{eq:MA1} and \eqref{eq:MA2},
  \begin{equation}
      \Delta_4=\mathfrak{c}_4(\Delta_3-\mathfrak{c}_1\mathfrak{c}_5)
-\mathfrak{c}_5(\mathfrak{c}_2\Delta_2-2\mathfrak{c}_1\mathfrak{c}_4)
-\mathfrak{c}_5^2+\mathfrak{c}_1\Delta_2\mathfrak{c}_6>0.
  \end{equation}
\end{prop}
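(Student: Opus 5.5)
We prove $\Delta_4>0$ by the same route as for $\mathcal{T}$: split off the sign-indefinite part through the decompositions $\mathfrak{c}_5=P_5-N_5$ and $\mathfrak{c}_6=P_6-N_6$, then certify positivity in the mass-action variables $\pmb{\mu}$.

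\textbf{Step 1: isolate the negative parts.} We write $\Delta_4$ as a polynomial in $\mathfrak{c}_5,\mathfrak{c}_6$ with coefficients built from the positive quantities $\mathfrak{c}_1,\dots,\mathfrak{c}_4,\Delta_2,\Delta_3$ of Prop.~\ref{prop:pbb}:
\[
\Delta_4=\mathfrak{c}_4\Delta_3+\mathfrak{c}_1\Delta_2\mathfrak{c}_6-\mathfrak{c}_5\big(\mathfrak{c}_2\Delta_2-\mathfrak{c}_1\mathfrak{c}_4\big)-\mathfrak{c}_5^2 .
\]
Substituting $\mathfrak{c}_5=P_5-N_5$ and $\mathfrak{c}_6=P_6-N_6$ and expanding, we separate $\Delta_4$ into a part in $P_5,P_6$ alone and a remainder carrying $N_5$ or $N_6$. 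By \eqref{eq:N56}, every monomial of the remainder contains the factor $v_6y_{12}$ times one of the three unstable cores. We first check symbolically, in the general sixteen parameters $\mathbf{p}$, whether the $N$-free part $\Delta_4|_{N_5=N_6=0}$ has only positive monomials. We expect it to, since it is the Hurwitz determinant of a system without unstable cores.

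\textbf{Step 2: pass to mass-action variables.} As in Lemma~\ref{lem:hom}, we use \eqref{eq:ciai}: $\Delta_4$ is homogeneous of degree $4$ in the coefficients $\mathfrak{c}_i$, so $\Delta_4(\mathbf{a})=(\delta_1\delta_3\delta_4)^4\Delta_4$. We then clear denominators with a positive monomial factor in $\delta_3,\xi_i,\eta_i,(1+\xi_i),(1+\eta_i)$ to obtain a polynomial $\widetilde{\Delta}_4^*$ in $\pmb{\mu}$. Next we collect its monomials with negative coefficients. We expect these to be few and to carry the factor $\xi_2\eta_2$, which is the image of $v_6y_{12}$ after substitution.

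\textbf{Step 3: absorb the negative monomials.} We group each negative monomial with positive monomials sharing its prefactor, imitating the structure $\Sigma^++\Gamma_1\mathcal{E}+\Gamma_2\mathcal{E}^\sigma$ from Prop.~\ref{prop:poscertificate}. We aim for factors of the form
\[
(\text{positive monomial})\times(\text{low-degree form in }\alpha+\tau_1+\theta_1,\ \tau_2+\gamma+\theta_2,\ e_4,\ \epsilon),
\]
and prove each such form positive by completing squares or by a negative discriminant, as was done for $\Sigma^{(e_4)}$ and $\Sigma^{(\epsilon)}$. The symmetry $\sigma$ should pair these factors, so only half of them need to be proved. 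Alternatively, we try a direct argument: show $\mathfrak{c}_5(\mathfrak{c}_2\Delta_2-\mathfrak{c}_1\mathfrak{c}_4)+\mathfrak{c}_5^2<\mathfrak{c}_4\Delta_3+\mathfrak{c}_1\Delta_2\mathfrak{c}_6$ using bounds of the type $P_5\Delta_2\ge \mathfrak{c}_1^2P_6$ already established in Prop.~\ref{prop:pbb}.

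\textbf{Main obstacle.} The hard step is Step 3. The product $-\mathfrak{c}_5\,\mathfrak{c}_2\Delta_2$ contains $-P_5\mathfrak{c}_2\Delta_2$, which is a large negative block not confined to the cores, and it has to cancel against $\mathfrak{c}_4\Delta_3$ in the expansion. The negative terms surviving that cancellation must be dominated, and as with $\mathcal{T}$, we expect this to succeed only under the constraints \eqref{eq:MA1}--\eqref{eq:MA2}. If the generic grouping fails, we will first try splitting the residual negative terms by powers of $e_4$ and $\epsilon$ separately, as in \eqref{C1}, before searching for any more elaborate SOS certificate.
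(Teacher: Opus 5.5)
Your proposal is not yet a proof. Step~3 is a search strategy (``we aim for'', ``we expect'', fallbacks by powers of $e_4,\epsilon$ or an SOS), and no certificate for $\Delta_4>0$ is actually produced.

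The missing idea is a structural fact that keeps the argument short and stays in the original parameters $\mathbf{p}$. Expand $\Delta_4$ fully in $\mathbf{p}$; your worry about the block $-P_5\mathfrak{c}_2\Delta_2$ disappears through cancellation. Only 266 of the 1,712,913 monomials are negative, and every one of them contains the $\sigma$-fixed core $ace_4f_{10}v_6y_{12}$. Hence $\Delta_4=P_{\Delta_4}-ace_4f_{10}v_6y_{12}\,\mathcal{N}_{\Delta_4}$, with $P_{\Delta_4}$ and $\mathcal{N}_{\Delta_4}$ both coefficientwise positive. Dividing \eqref{eq:MA1} by \eqref{eq:MA2} eliminates $b_4/b_{10}$ and gives the binomial identity
\[
e_4f_{10}v_6y_{12}(u_2+u_3)(x_8+x_9)=e_1f_7x_9u_3(y_{11}+y_{12})(v_5+v_6).
\]
Multiply $\Delta_4$ by $(u_2+u_3)(x_8+x_9)$ and use this identity. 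The negative block becomes $-(y_{11}+y_{12})(v_5+v_6)acf_7e_1x_9u_3\,\mathcal{N}_{\Delta_4}$, and exact expansion shows the result is coefficientwise positive. This needs no passage to $\pmb{\mu}$, no completed squares and no discriminants. Your $\pmb{\mu}$ route is not wrong in principle. However, it does not locate the common factor of the negative monomials; your guess $\xi_2\eta_2$ is made in variables where that factor is no longer visible. It also does not identify the constraint-induced substitution that removes them.

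Two further points. First, your fallback ``direct argument'' via the unconstrained bounds of Prop.~\ref{prop:pbb} cannot succeed, because it never uses \eqref{eq:MA1}--\eqref{eq:MA2} and $\Delta_4$ is not positive on all of $\mathbb{R}^{16}_{>0}$. Take the example of Sec.~\ref{sec:examplehopfpr} at $v_6=40000$. There are three eigenvalues with positive real part and $\mathfrak{c}_6<0$, and $\Delta_1,\Delta_2,\Delta_3>0$ hold for all $\mathbf{p}$. The Routh--Hurwitz count, with $V$ the number of sign changes, then requires
\[
V(1,\Delta_1,\Delta_3,\Delta_5)+V(1,\Delta_2,\Delta_4,\mathfrak{c}_6\Delta_5)=3,
\]
which forces $\Delta_5<0$ and $\Delta_4<0$ there. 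If $\Delta_4$ happened to vanish exactly at that point, the same conclusion holds at arbitrarily close parameters. Second, the heuristic in Step~1, that $\Delta_4|_{N_5=N_6=0}$ is ``the Hurwitz determinant of a system without unstable cores'', has no basis. Replacing $\mathfrak{c}_5,\mathfrak{c}_6$ by $P_5,P_6$ while keeping $\mathfrak{c}_1,\dots,\mathfrak{c}_4$ corresponds to no Jacobian, and that split is lost anyway once you pass to $\pmb{\mu}$.
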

\proof

Explicit computation of $\Delta_4$ shows that $\Delta_4$ contains 1,712,913 monomials (of which 266 negative). Crucially, all negative monomials share the common factor $ace_4f_{10}v_6y_{12}$, which is the one associated to the unstable core fixed by the $\mathbb{Z}_2$ symmetry $\sigma$, see Sec.~\ref{sec:sufficient}. We can then collect positive and negative monomials as follows:
\begin{equation}
    \Delta_4=P_{\Delta_4}-N_{\Delta_4}=P_{\Delta_4}-ace_4f_{10}v_6y_{12}\;\mathcal{N}_{\Delta_4}.
\end{equation}
 Thus, since \eqref{eq:MA1} and \eqref{eq:MA2} imply 
\begin{equation}
    e_4f_{10}v_6y_{12}(u_2+u_3)(x_8+x_9)=e_1f_{7}x_9u_3(y_{11}+y_{12})(v_5+v_6),
\end{equation}
we write
\begin{equation*}
\begin{split}
   (u_2+u_3)(x_8+x_9) \Delta_4&=(u_2+u_3)(x_8+x_9)P_{\Delta_4}-(u_2+u_3)(x_8+x_9)ace_4f_{10}v_6y_{12}\;\mathcal{N}_{\Delta_4}\\
   &=(u_2+u_3)(x_8+x_9)P_{\Delta_4}-(y_{11}+y_{12})(v_5+v_6)acf_{7}e_1x_9u_3\;\mathcal{N}_{\Delta_4}.
   \end{split}
\end{equation*}
An explicit expansion of the right-hand-side shows that it is coefficientwise positive, with 4,295,079 terms, thus $\Delta_4>0$. See the attached file \texttt{dualfutile\_positivity\_checks.m}.
\endproof

The second step simply shows that a nonzero imaginary pair requires $\mathfrak{c}_6>0$.
\begin{lemma}\label{lem:mac6pos}
    Under the mass action constraints \eqref{eq:MA1} and \eqref{eq:MA2}, a Jacobian with nonzero purely imaginary eigenvalues implies $\mathfrak{c}_6>0$.
\end{lemma}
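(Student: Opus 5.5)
The plan is to argue by contraposition: I will show that under \eqref{eq:MA1} and \eqref{eq:MA2}, $\mathfrak{c}_6\le 0$ forces $\Delta_5\neq 0$. This excludes a nonzero purely imaginary pair, since such a pair forces $\Delta_5=0$, as recalled in Sec.~\ref{sec:sufficient}. The tool is the universal identity of Prop.~\ref{prop:Hdelta25},
\[
\Delta_2\Delta_5=(\mathfrak{c}_5\Delta_2-\mathfrak{c}_1^2\mathfrak{c}_6)\Delta_4-\mathfrak{c}_6\Delta_3^2 ,
\]
combined with positivity facts already established: $\mathfrak{c}_1>0$ and $\Delta_2,\Delta_3>0$ hold for all $\mathbf{p}>0$ (Prop.~\ref{prop:pbb}), and $\Delta_4>0$ holds under mass action (Prop.~\ref{prop:delta4pos}).

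The missing ingredient is the sign of $\mathfrak{c}_5$. My key intermediate claim is that $\mathfrak{c}_5>0$ holds under the mass action constraints, irrespective of the sign of $\mathfrak{c}_6$. Granting this, the argument closes at once. If $\mathfrak{c}_6\le0$, each of the three summands $\mathfrak{c}_5\Delta_2\Delta_4$, $-\mathfrak{c}_1^2\mathfrak{c}_6\Delta_4$ and $-\mathfrak{c}_6\Delta_3^2$ is nonnegative, and the first is strictly positive. Hence $\Delta_2\Delta_5>0$, so $\Delta_5>0$, and no purely imaginary pair exists. This covers the degenerate case $\mathfrak{c}_6=0$ as well, so no separate treatment of a double zero eigenvalue is needed.

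To prove $\mathfrak{c}_5>0$ under mass action, I would mimic the proof of Prop.~\ref{prop:delta4pos}. By \eqref{eq:N56}, the only negative terms are $N_5=v_6y_{12}(b_4cf_{10}+ab_{10}e_4)$, and these are $\sigma$-related. For the monomial $ab_{10}e_4v_6y_{12}$, I would use \eqref{eq:MA1} in the form $e_4y_{12}(u_2+u_3)b_{10}=b_4u_3(y_{11}+y_{12})e_1$. After multiplying $\mathfrak{c}_5$ by $(u_2+u_3)$, this trades the negative monomial for $-a b_4e_1u_3v_6(y_{11}+y_{12})$. Symmetrically, I would use \eqref{eq:MA2} to trade $b_4cf_{10}v_6y_{12}$ after multiplying by $(x_8+x_9)$. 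I then expect the product $(u_2+u_3)(x_8+x_9)\mathfrak{c}_5$ to be coefficientwise positive, because $P_5$ should contain matching monomials (for example $a e_1 u_3 v_6 b_4\cdot(\ldots)$ from the cofactor expansion) which absorb the traded terms. I would verify this by exact expansion, as done for $\Delta_4$.

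The main obstacle is precisely this last step: confirming that after substitution every negative monomial is dominated by a positive one. If the plain coefficientwise check fails, I would pass instead to the steady-state variables $\pmb{\mu}$ of \eqref{massv}. There I would clear denominators as in Lemma~\ref{lem:hom} and look for a small factorization or square-completion certificate, exploiting the $\sigma$-symmetry to treat only one of the two negative families.
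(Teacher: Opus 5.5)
\textbf{The gap.} Your key intermediate claim, that $\mathfrak{c}_5>0$ under \eqref{eq:MA1}--\eqref{eq:MA2} whatever the sign of $\mathfrak{c}_6$, is false. So no certificate for it exists, neither coefficientwise nor in the variables $\pmb{\mu}$. A concrete family shows this.

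\emph{Counterexample.} Take $a=e_4=v_6=y_{12}=s$, $e_1=2s^2/(1+s)$, $f_{10}=(1+s)/(2s)$, and set the other ten symbols equal to $1$. Both constraints hold: each side of \eqref{eq:MA1} equals $1/2$, and each side of \eqref{eq:MA2} equals $s/(1+s)$.

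\emph{Why $\mathfrak{c}_5$ is negative.} In $G_{\mathrm{red}}$ the entries of order $s$ lie only in the columns indexed by $Q_1,U,V,Y$. The only such entry in column $Q_1$ is $a$. So a term of a principal $5\times 5$ minor has at most four large factors. A term with exactly four is forced to be the $5$-cycle $Q_1\to Y\to Q_2\to V\to U\to Q_1$, which is $-ab_{10}e_4v_6y_{12}$. Hence $\mathfrak{c}_5=-s^4+O(s^3)<0$ for large $s$. Here also $\mathfrak{c}_6=-\tfrac92 s^4+O(s^3)<0$, as Prop.~\ref{prop:pbb}(4) requires.

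\emph{Your specific coefficientwise check.} It fails for a visible structural reason. The symbols $a$ and $e_1$ occur only in row $U$ of $G_{\mathrm{red}}$, so no monomial of $\mathfrak{c}_5$, or of $(u_2+u_3)P_5$, contains $ae_1$. The traded monomials $-ab_4e_1u_3v_6y_{11}$ and $-ab_4e_1u_3v_6y_{12}$ therefore have nothing to cancel against.

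\textbf{The missing idea.} You do not need any global control of the bracket $\mathfrak{c}_5\Delta_2-\mathfrak{c}_1^2\mathfrak{c}_6$, because it is automatically positive at a purely imaginary root. Write $g_{\mathrm{red}}(i\omega)=0$ with $\omega>0$ and $\Omega=\omega^2$. The imaginary part gives $\mathfrak{c}_1\Omega^2-\mathfrak{c}_3\Omega+\mathfrak{c}_5=0$, and the real part gives $-\Omega^3+\mathfrak{c}_2\Omega^2-\mathfrak{c}_4\Omega+\mathfrak{c}_6=0$. Eliminating $\Omega^3$ and then $\Omega^2$ yields $\Delta_3\Omega=\mathfrak{c}_5\Delta_2-\mathfrak{c}_1^2\mathfrak{c}_6$. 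Insert this and $\Delta_5=0$ into \eqref{HurIddelta25}. You get $0=\Delta_3(\Omega\Delta_4-\mathfrak{c}_6\Delta_3)$, so $\mathfrak{c}_6=\Omega\Delta_4/\Delta_3>0$ using only $\Delta_3,\Delta_4>0$. This is the paper's proof. It uses exactly the ingredients you listed, minus the false one.

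\textbf{A note on your target.} Your contrapositive aim, $\mathfrak{c}_6\le 0\Rightarrow\Delta_5\neq 0$, is stronger than the lemma and the wrong thing to aim for. $\Delta_5$ vanishes whenever two eigenvalues sum to zero, including a real pair $\pm r$. When $\mathfrak{c}_6<0$ there is a positive real eigenvalue $r$, and nothing excludes $-r$ from the spectrum. You should only rule out that $\Delta_5=0$ comes from an imaginary pair, which is precisely what the identity above does.
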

\proof
For the system under  mass action constraints, we recall that the Hurwitz determinants $\Delta_2,\Delta_3>0$ by Prop.~\ref{prop:pbb}, and $\Delta_4>0$ by Prop.~\ref{prop:delta4pos}. Therefore, assume $\lambda=i\omega$ is a root of the characteristic polynomial \[g_{\mathrm{red}}(\lambda)=\lambda^6+\mathfrak{c}_1 \lambda^5+ \mathfrak{c}_2 \lambda^4+\mathfrak{c}_3\lambda^3+\mathfrak{c}_4\lambda^2+\mathfrak{c}_5\lambda+\mathfrak{c}_6.\]This, as recalled above, forces automatically $\Delta_5=0$. We get that
\begin{equation}
-\omega^6+\mathfrak{c}_1 i\omega^5+ \mathfrak{c}_2 \omega^4-\mathfrak{c}_3i\omega^3-\mathfrak{c}_4\omega^2+\mathfrak{c}_5i\omega+\mathfrak{c}_6=0.
\end{equation}
Splitting real from imaginary part yields:
\begin{equation}
    \begin{cases}
        -\omega^6+ \mathfrak{c}_2 \omega^4-\mathfrak{c}_4\omega^2+\mathfrak{c}_6=0\\
        \mathfrak{c}_1 \omega^5-\mathfrak{c}_3\omega^3+\mathfrak{c}_5\omega=0
    \end{cases}
\end{equation}
Since $\omega>0$, we may  divide the second equality by $\omega$, and using notation $\Omega=\omega^2$ we get
\begin{equation}
    \begin{cases}
          -\Omega^3+ \mathfrak{c}_2 \Omega^2-\mathfrak{c}_4\Omega+\mathfrak{c}_6=0\\
        \mathfrak{c}_1 \Omega^2-\mathfrak{c}_3\Omega+\mathfrak{c}_5=0
    \end{cases}
\end{equation}
To express the above equalities in terms of $\Delta_2=\mathfrak{c}_1\mathfrak{c}_2-\mathfrak{c}_3$, we multiply the first equality by $\mathfrak{c}_1$ and add the second equality multiplied by $\Omega$, which yields
\begin{equation}
\begin{split}
    0&=\mathfrak{c}_1(-\Omega^3+ \mathfrak{c}_2 \Omega^2-\mathfrak{c}_4\Omega+\mathfrak{c}_6)+\Omega( \mathfrak{c}_1 \Omega^2-\mathfrak{c}_3\Omega+\mathfrak{c}_5)\\
    &=-\mathfrak{c}_1\Omega^3+ \mathfrak{c}_1\mathfrak{c}_2 \Omega^2-\mathfrak{c}_1\mathfrak{c}_4\Omega+\mathfrak{c}_1\mathfrak{c}_6+\mathfrak{c}_1\Omega^3-\mathfrak{c}_3\Omega^2+\mathfrak{c}_5\Omega\\
    &=\Delta_2\Omega^2+(\mathfrak{c}_5-\mathfrak{c}_1\mathfrak{c}_4)\Omega+\mathfrak{c}_1\mathfrak{c}_6
\end{split}
\end{equation}
Now, similarly, we want to express the above also in terms of $\Delta_3=\mathfrak{c}_3\Delta_2+\mathfrak{c}_1\mathfrak{c}_5-\mathfrak{c}_1^2\mathfrak{c}_4$ so to be able to leverage the formula for $\Delta_5$ directly. To do so, we multiply $\Delta_2\Omega^2+(\mathfrak{c}_5-\mathfrak{c}_1\mathfrak{c}_4)\Omega+\mathfrak{c}_1\mathfrak{c}_6=0$ by $\mathfrak{c}_1$ and use the imaginary-equation relation $\mathfrak{c}_1 \Omega^2=\mathfrak{c}_3\Omega-\mathfrak{c}_5$, which yields:
\begin{equation}
\begin{split}
    0&=\mathfrak{c}_1\Delta_2\Omega^2+\mathfrak{c}_1(\mathfrak{c}_5-\mathfrak{c}_1\mathfrak{c}_4)\Omega+\mathfrak{c}_1^2\mathfrak{c}_6\\
    &=\Delta_2(\mathfrak{c}_3\Omega-\mathfrak{c}_5)+\mathfrak{c}_1(\mathfrak{c}_5-\mathfrak{c}_1\mathfrak{c}_4)\Omega+\mathfrak{c}_1^2\mathfrak{c}_6\\
    &=(\mathfrak{c}_3\Delta_2+\mathfrak{c}_1\mathfrak{c}_5-\mathfrak{c}_1^2\mathfrak{c}_4)\Omega -\mathfrak{c}_5\Delta_2+\mathfrak{c}_1^2\mathfrak{c}_6\\
    &=\Delta_3\Omega-\mathfrak{c}_5\Delta_2+\mathfrak{c}_1^2\mathfrak{c}_6
\end{split}
\end{equation}
Thus, $\Delta_3\Omega=\mathfrak{c}_5\Delta_2-\mathfrak{c}_1^2\mathfrak{c}_6$ and \eqref{HurIddelta25} 
implies that, at a purely imaginary root,
\[0=\Delta_2\Delta_5=\Delta_3\Delta_4\Omega-\mathfrak{c}_6\Delta_3^2.\]
Strict positivity of $\Delta_2$, $\Delta_3$, and $\Delta_4$ implies $\mathfrak{c}_6 >0$. 
\endproof

Gluing together Lemma \ref{lem:mac6pos}, Lemma \ref{lem:centralhurwitz}, and the positivity certificate Prop.~\ref{prop:poscertificate}, the main theorem follows:

\begin{theorem}\label{thm:main}
    Under the mass action constraints \eqref{eq:MA1} and \eqref{eq:MA2}, $G$ does not possess nonzero purely imaginary eigenvalues. In particular, the associated ODE \eqref{doubleeqma} system does not have the capacity for Hopf bifurcation at a positive steady state.
\end{theorem}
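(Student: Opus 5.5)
The plan is to argue by contradiction, gluing together the results already established. Suppose that at some positive steady state of \eqref{doubleeqma} the Jacobian $G$ has a pair $\pm i\omega$ with $\omega>0$. By Prop.~\ref{lem:maconstaints}, its symbolic entries $\mathbf{p}$ satisfy \eqref{eq:MA1} and \eqref{eq:MA2}. The three identically zero eigenvalues of $G$ coming from the conservation laws \eqref{eq:conservedquantities1}--\eqref{eq:conservedquantities2} are irrelevant, because nonzero eigenvalues of $G$ are exactly the roots of $g_{\mathrm{red}}$, as in \eqref{eq:charpolyG}. So $i\omega$ is a root of $g_{\mathrm{red}}$, and it forces $\Delta_5=0$.

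Next I collect the sign information available under the mass action constraints.
\begin{enumerate}
\item Lemma~\ref{lem:mac6pos} gives $\mathfrak{c}_6>0$.
\item Prop.~\ref{prop:delta4pos} gives $\Delta_4>0$.
\item Prop.~\ref{prop:pbb} gives $\Delta_2,\Delta_3>0$ and $B_2^+>0$; with $\Delta_4>0$ it also gives $B_1^+>0$.
\item Prop.~\ref{prop:poscertificate} gives $\tilde{\mathcal{T}}^*>0$.
\end{enumerate}
I need to transfer the last item back to $\mathcal{T}>0$ at this point $\mathbf{p}$. Lemma~\ref{lem:hom} and the analytic bijection between constrained $\mathbf{p}$ and $\pmb{\mu}\in\mathbb{R}^{14}_{>0}$ of Sec.~\ref{sec:massv} do this, since the prefactor $\delta_3^{12}\xi_1^5\eta_1^5(1+\xi_2)^9(1+\eta_2)^9$ is strictly positive.

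Lemma~\ref{lem:centralhurwitz} now applies, since its hypotheses $\mathfrak{c}_6>0$, $\Delta_4>0$ and $\mathcal{T}\ge 0$ all hold, and it yields $\Delta_5>0$. This contradicts $\Delta_5=0$, so no nonzero purely imaginary eigenvalue exists. Since a Hopf bifurcation at a positive steady state requires such a pair, the second sentence of Thm.~\ref{thm:main} follows.

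The one delicate point is the claim that $\pm i\omega$ forces $\Delta_5=0$; I would not rely on it alone. To verify it, reuse the computation in the proof of Lemma~\ref{lem:mac6pos}. There $\Omega=\omega^2$ satisfies $\Delta_3\Omega=\mathfrak{c}_5\Delta_2-\mathfrak{c}_1^2\mathfrak{c}_6$. Substituting into \eqref{HurIddelta25} gives $\Delta_2\Delta_5=\Delta_4\Delta_3\Omega-\mathfrak{c}_6\Delta_3^2$. This expression vanishes once one also uses the real-part equation, which is the standard Orlando-type identity. If that last step is not transparent, I would instead derive $\Delta_5=0$ from the classical fact that $\Delta_{n-1}$ equals, up to a positive constant, the product of pairwise eigenvalue sums, which contains the factor $i\omega+(-i\omega)=0$. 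Everything else is bookkeeping, so I expect this identification, and the consistency of the constrained parametrization, to be the only places needing care.
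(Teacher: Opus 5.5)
Your proposal is correct and follows the paper's own argument. It combines Lemma~\ref{lem:mac6pos} ($\mathfrak{c}_6>0$), Prop.~\ref{prop:delta4pos} ($\Delta_4>0$) and Prop.~\ref{prop:poscertificate} ($\mathcal{T}>0$) into Lemma~\ref{lem:centralhurwitz} to get $\Delta_5>0$, which contradicts $\Delta_5=0$ at a purely imaginary pair; your extra verification of $\Delta_5=0$ is also sound, since one more Routh elimination step gives $\Delta_4\Omega=\mathfrak{c}_6\Delta_3$ (or one can use Orlando's formula), whereas the paper simply takes this as well known.
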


\section{Implementations}\label{sec:computations}

The proofs contain several claims whose verification requires computer algebra. To this end, four \texttt{MATLAB} scripts accompany the paper:
\begin{center}
\texttt{dualfutile\_positivity\_checks.m},\\
\texttt{dualfutile\_ma\_nohopf.m},\\
\texttt{dualfutile\_identity\_checks.m}, \\
\texttt{dualfutile\_pr\_hopf.m}.
\end{center}
These scripts are described below and are available at \begin{center}\url{https://github.com/nvassena/dualfutile_hopfnohopf} \; . \end{center}

\paragraph{\texttt{dualfutile\_positivity\_checks.m}.} This code implements all the minor positivity statements in the paper, i.e. with the exception of the ones in Prop.~\ref{prop:poscertificate}. The positivity statements are listed in the following table, accompanied by the location in the paper. All checks are done by explicit expansion and coefficient-wise positivity. Here an overview.
\begin{center}
\begin{tabular}{@{}p{0.75\textwidth}l@{}}
\toprule
\textbf{Target to be shown} & \textbf{Location}\\
\midrule
$\mathfrak{c}_i$ for $i=0,...,4$ & Prop.~\ref{prop:pbb}\\
$P_5,N_5,P_6,N_6$ & Sec.~\ref{sec:sufficient}\\
$\Delta_i$ for $i=1,...3$ & Prop.~\ref{prop:pbb}\\
$N_6 P_5-N_5 P_6$ & Prop.~\ref{prop:pbb}\\
$P_5 \Delta_2-\mathfrak{c}_1^2 P_6$ & Prop.~\ref{prop:pbb}\\    $\mathfrak{c}_1 \mathfrak{c}_2-2\mathfrak{c}_3$ & Prop.~\ref{prop:pbb}\\
$(u_2+u_3)(x_8+x_9)P_{\Delta_4}-(y_{11}+y_{12})(v_5+v_6)acf_{7}e_1x_9u_3\; \mathcal{N}_{\Delta_4}$ & Prop.~\ref{prop:delta4pos}\\
\bottomrule
\end{tabular}
\end{center}
The implementation records explicitly the number of the positive monomials in the expansions, as indicated in the text.

\paragraph{\texttt{dualfutile\_ma\_nohopf.m}.}
This is the main computer-algebra verification underlying Prop.~\ref{prop:poscertificate}.  The script starts from the quadratic eigenvalue problem on the RHS of \eqref{eq:qep}, and computes its characteristic coefficients $\mathbf{a}=(\mathfrak{a}_0,...,\mathfrak{a}_6)$, expressed already in the mass-action variables for faster computation. A small delicacy: at this stage, slightly different mass action variables are used, namely
\[\pmb{\mu}'=\delta_1,\delta_3,\delta_4,\alpha,\tau_1,\theta_1,
\tau_2,\gamma,\theta_2,\epsilon,e_4,b_4,
\frac{u_3}{\tau_1},\frac{v_6}{\theta_1}.\]
The advantage is that the matrix $R$ has polynomial entries in such variables:
\[R=
\begin{pmatrix}
-\dfrac{u_3}{\tau_1}\tau_1
&0
&\dfrac{u_3}{\tau_1}\tau_2
&0\\[2mm]
0
&\dfrac{v_6}{\theta_1}\theta_1
&0
&-\dfrac{v_6}{\theta_1}\theta_2
\end{pmatrix},
\]
where the mass-action constraints \eqref{eq:MA1} and \eqref{eq:MA2} have been used in the form:
\[
\frac{u_3}{\tau_1}=\frac{y_{12}}{\tau_2},
\qquad
\frac{v_6}{\theta_1}=\frac{x_9}{\theta_2}.
\]
The script then forms 
\[
\widetilde{\mathcal{T}}(\mathbf{a)}
=
P_5\Delta_4-P_6B_1^+-N_5B_2^+.
\]
The variables $\pmb{\mu}'$ are computationally very convenient for this first stage, but they are not all global variables since e.g. 
\[\dfrac{u_3}{\tau_1}=\dfrac{u_3}{\delta_1(u_2+u_3)}<\frac{1}{\delta_1}.\]
They are thus not suited for the positivity certificate itself. For this reason, the script subsequently perform the substitution 
\[\pmb{\mu}'\mapsto \pmb{\mu},\]
jointly with the exact denominator clearance \eqref{eq:518}
\[\tilde{\mathcal{T}}^*=\delta_3^2
\xi_2^5\eta_2^5
(1+\xi_1)^5(1+\eta_1)^5
(1+\xi_2)^4(1+\eta_2)^4 \tilde{\mathcal{T}}(\mathbf{a}).\]
%En passant, the script also verifies that the exponent vector $(2,5,5,5,5,4,4)$ is minimal in rendering the resulting expression a polynomial. 
The central step is the verification of the exact coefficient-wise positivity of $\Sigma^+$, i.e.
\[
\Sigma^+=\widetilde{\mathcal{T}}^{*}
-\Gamma_1\mathcal{E}-\Gamma_2\mathcal{E}^\sigma
\geq_{\mathrm{coeff}}0.
\]
Here, for computational simplicity, $\Gamma_1\mathcal{E}$ and $\Gamma_2\mathcal{E}^\sigma$ are constructed from scratch. The detailed positivity proofs for $\Sigma^{(e_4)},\Sigma^{(\epsilon)},\Sigma^{(e_4\epsilon)}$ in Prop.~\ref{prop:poscertificate} are further re-checked for safety. %For further safety, the script also verifies that the negative coefficients of $\widetilde{\mathcal{T}}^{*}$ corresponds exactly to those predicted by$\Gamma_1E$ and $\Gamma_2E^\sigma$ in Prop.~\ref{prop:poscertificate}. 
 %The script also reports summary statistics on the polynomial expansions andtheir coefficients. 
  All computations are performed exactly with integer
coefficients. Since \texttt{MATLAB} stores these integers in double precision,
the script verifies throughout that they remain within the range of exact
integer representation. It also checks that the encoding of the monomial
exponents does not overflow. On a MacBook Pro equipped with an Apple M2 chip and \texttt{MATLAB} R2026a, a complete run of the verification took 134.9 seconds.

\paragraph{\texttt{dualfutile\_identity\_checks.m}.} This file is only for sanity check and tests three equalities that have been proven analytically in the paper. It checks the following.
\begin{center}
\begin{tabular}{@{}p{0.57\textwidth}l@{}}
\toprule
\textbf{Target Identity} & \textbf{Location}\\
\midrule
$\det(\lambda \operatorname{Id}-G)=\lambda^3\det (\lambda \operatorname{Id}-G_{\mathrm{red}})$ & Sec.~\ref{sec:preliminaries}\\
$\Delta_2\Delta_5=(\mathfrak{c}_5\Delta_2-\mathfrak{c}_1^2\mathfrak{c}_6)\Delta_4-\mathfrak{c}_6\Delta_3^2$ & Prop.~\ref{prop:Hdelta25}\\
$P_6\Delta_5=(N_6P_5-N_5P_6)\Delta_4+\mathfrak{c}_6(P_5\Delta_4-P_6B_1^+)$ & Lemma~\ref{lem:centralhurwitz}\\
\bottomrule
\end{tabular}
\end{center}

\paragraph{\texttt{dualfutile\_pr\_hopf.m}.} This file simply checks the spectral computations for the Hopf example under parameter-rich kinetics in Sec.~\ref{sec:examplehopfpr}. It is included because the scale-separation of the variables may produce inconclusive results, depending on the approximation, if a too naive numerical approach is taken.  The script therefore already employs high-precision numerical eigenvalue computations.

\section{Discussion on AI-contribution}\label{sec:discussionai}

The mathematical backbone of this paper was developed over approximately 48 hours, from 7 to 9 August 2026, through extensive interaction with \texttt{GPT-5.6 Sol}, accessed through ChatGPT with a \textsc{Plus} subscription. For clarity of presentation, I group my interaction with the \texttt{LLM} in five logical steps, although steps 2--4 each consisted of dozens of individual queries. The initial exploration was conducted within a single conversation with \texttt{Very High} reasoning capability. Subsequent checks and counterchecks were aided also by different and independent conversations. Here below a representation of this interaction, schematically represented as input and output.

\begin{center}
\textbf{In-1}: Does $G$ admit purely imaginary eigenvalues for a choice of the 16 parameters?\\
\textbf{Out-1}: Yes, an example was produced.\\
$\downarrow$\\
\textbf{In-2}: Adapt this example to the mass action case.\\
\textbf{Out-2}: No mass action example was found.\\
$\downarrow$\\
\textbf{In-3}: Prove absence of Hopf bifurcation under mass action kinetics.\\
\textbf{Out-3}: Inconclusive results.\\
$\downarrow$\\
\textbf{In-4}: Work with $\mathfrak{c}_i=P_i-N_i$, $i=5,6$, prove Conjecture \ref{conjecture}.\\
\textbf{Out-4}: Several inconclusive results, including Lemma \ref{lem:centralhurwitz}.\\
$\downarrow$\\
\textbf{In-5}: Check $\mathcal{T}\ge0$ under mass action kinetics.\\
\textbf{Out-5}: Nonnegativity certificate for $\mathcal{T}\ge0$ produced.
\end{center}

The example of a Hopf point for $G$ under parameter-rich kinetics was found in very few queries: I have then slightly modified the example. Although inconclusive, Step 2 and Step 3 were very valuable explorations: the content of Sec.~\ref{sec:preliminaries}, Sec.~\ref{sec:massc} and Sec.~\ref{sec:massv}, Sec.~\ref{sec:exclusion} was essentially developed and learned here, including the formulation of the associated quadratic eigenvalue problem, and Lemma \ref{lem:c6pos}. I assess my central contribution to be in Step 4, where I turned again to the parameter-rich kinetics perspective, and suggested to work on Conjecture \ref{conjecture}, namely under the assumption $\mathfrak{c}_6>0$. In particular, my structural suggestion in splitting $\mathfrak{c}_6=P_6-N_6$ and $\mathfrak{c}_5=P_5-N_5$ and the request of expressing $\Delta_5$ in terms of $N_6P_5-N_5P_6 >0$ lead rather fast to several seemingly equivalent Lemmas of the kind of Lemma \ref{lem:centralhurwitz}. The work mostly concentrated in this step, where I pursued many different structural routes that I did not report here in detail. As all of these attempts ultimately failed, I have asked to test few of the sufficient results under mass action kinetics. The quantity $\mathcal{T}$ appeared to be the most suited for the purposes, according to preliminary tests, even though I cannot formally be sure that this is the best route. Still, the positivity certificate for $\mathcal{T}$ under mass action kinetics was provided in a single query after a time-reasoning of 61 minutes and 8 seconds. My independent check of the entire conclusions, as well as framing them in an acceptable form, took much more time than the joint development of the backbone itself. In particular, the originally proposed positivity certificate relied on a considerably more involved route via Bernstein polynomials representation \cite{BernCert:08, farouki:12bern}. This route included the introduction of further bounded variables, and the splitting of the parameter space into four distinct regions, which were analyzed one by one. Upon checking and writing up the details for the paper, I suspected a more effective underlying structure, which I asked the \texttt{LLM} to investigate. This produced the current certificate, which relies on the introduction of the global variables \eqref{massv} based on the ratios between partial derivatives of reaction rates with respect to the same reactants. The \texttt{MATLAB} codes to test the computer algebra were also written by the \texttt{LLM}, improved under my requests, and independently checked.

Finally, I report two informal and subjective observations from this experience. First, I found the \texttt{LLM} particularly effective at the rapid manipulation of symbolic expressions and the checking of large polynomials: this allows - in a very short time - the exploration of various paths that, at a first sight, would seem mathematically similar, so that I would probably not have spent so much time in testing many of them after a few initial failures. Second, this interaction appeared markedly path-dependent: repeated attempts within the same conversation tended to follow related routes unless a genuinely different direction was explicitly suggested. In particular, answers would typically not improve on the same question after a first failure till a human-led change of direction was introduced. Switching between parallel approaches proved particularly useful for this specific problem.

\section{Conclusion}\label{sec:conclusion}

This paper proves that the sequential and distributive dual futile cycle is able to support Hopf bifurcations under parameter-rich kinetics but not under mass action kinetics. The overall logic of the proof follows a Routh--Hurwitz scheme whose computational size, unfortunately, prevents a complete structural interpretation. Still, I would like to draw the attention on few structural features of relevance. 

First, a prominent role in the proof is played by the ratio variables:
\begin{equation*}
  \dfrac{e_4}{e_1},\qquad\dfrac{b_4}{b_{10}},\qquad \dfrac{f_{10}}{f_7},\qquad \dfrac{u_3}{u_2},
\qquad
\dfrac{y_{12}}{y_{11}},
\qquad
\dfrac{x_9}{x_8},
\qquad
\dfrac{v_6}{v_5}.
\end{equation*}
These are precisely all the possible ratios between partial derivatives of reaction rates with respect to the same reactant. In fact, the two derivatives forming each ratio occur in the same column of the symbolic Jacobian $G$, and the seven ratios above exhaust all such possibilities. This clean change of variables reflects the fact that seven species ($\ce{B}$,$\ce{E}$,$\ce{F}$,$\ce{U}$,$\ce{V}$,$\ce{X}$,$\ce{Y}$) participate as a reactant in exactly two reactions, while two species $\ce{A}$, and $\ce{C}$, which can be viewed as `first and last substrate in the chain' participate only in one. This is a specific structural feature of these phosphorylation networks.

Second, every negative monomial in the characteristic coefficients contains one of the three unstable-core expressions
\[
b_4cf_{10}v_6y_{12},\qquad ab_{10}e_4v_6y_{12},\qquad ace_4f_{10}v_6y_{12},
\]
all of which share the factor $v_6y_{12}$. Nevertheless, positivity of all the characteristic coefficients $\mathfrak{c}_i$ would not by itself preclude a Hopf bifurcation either, even though any negative summand of the fourth Hurwitz determinant $\Delta_4$ includes the factor  $ace_4f_{10}v_6y_{12}$. What really remains missing is an explanation of how the network structure generates negative contributions to the Hurwitz determinants. Their explicit expansions exhibit clear recurring patterns, but the structural origin of these patterns remains elusive. The explicit manipulations of the negative terms of $\mathcal{E}$ in the positivity certificate consistently revealed a quadratic structure, plausibly originating from underlying squared expressions. Despite repeated attempts, I have not found a higher-level formulation that explains this structure and avoids the computer-assisted certificate. Such an upgraded approach is possibly needed to tackle Conjecture \ref{conjecture}, which has so far resisted any attempt.

Third, the $\mathbb{Z}_2$ symmetry $\sigma$ has been used primarily to reduce certain computations by half.

Whether sequential and distributive $n$-futile cycles, for $n\ge3$, admit in general Hopf bifurcation under parameter-rich or mass-action kinetics is - to my knowledge - still an open question. Preliminary investigations for $n=3$ are very promising about this possibility. A natural starting point would be to assess whether any $n$-futile cycle admits such a clean formulation as a quadratic eigenvalue problem as for $n=2$. This appears natural, since such formulation only required introducing new variables $[\ce{A}]+[\ce{U}]$ and $[\ce{C}]+[\ce{X}]$, namely the first ($\ce{A}$) and the last ($\ce{C}$) substrate in the chain, summed to their respective associated intermediate, $\ce{U}$ and $\ce{X}$. A general structural description and characterization of this phenomenon w.r.t. $n$ is a natural direction for future work.

\bibliographystyle{amsalpha}
\bibliography{references}{}

@article{Blokhuis25,
  title={Stoichiometric recipes for periodic oscillations in reaction networks},
  author={Blokhuis, Alexander and Stadler, Peter F and Vassena, Nicola},
  journal={Proceedings of the Royal Society A},
  year={2026}
}

@book{GolubitskySymmBook,
  title={Singularities and Groups in Bifurcation Theory: Volume II},
  author={Golubitsky, Martin and Stewart, Ian and Schaeffer, David G},
  volume={69},
  year={2012},
  publisher={Springer Science \& Business Media}
}

@article{TorresFeliu21,
author = {Torres, Ang\'{e}lica and Feliu, Elisenda},
title = {Symbolic Proof of Bistability in Reaction Networks},
journal = {SIAM Journal on Applied Dynamical Systems},
volume = {20},
number = {1},
pages = {1-37},
year = {2021},
doi = {10.1137/20M1326672}
}

@article{ThomsonGunawardena:09,
  author =       {Thomson, Matthew and Gunawardena, Jeremy},
  doi =          {10.1038/nature08102},
  journal =      {Nature},
  number =       {7252},
  pages =        {274-277},
  title =        {Unlimited multistability in multisite
                  phosphorylation systems},
  volume =       {460},
  year =         {2009},
}

@article{AliciaetalToric:2012,
  title =        {Chemical reaction systems with toric steady states},
  author =       {P{\'e}rez Mill{\'a}n, Mercedes and Dickenstein,
                  Alicia and Shiu, Anne and Conradi, Carsten},
  journal =      {Bulletin of Mathematical Biology.},
  volume =       {74},
  pages =        {1027-1065},
  year =         {2012},
  doi =          {10.1007/s11538-011-9685-x},
}

@article{HellRendall:2015,
  title =        {A proof of bistability for the dual futile cycle},
  author =       {Hell, Juliette and Rendall, Alan D},
  journal =      {Nonlinear Analysis: Real World Applications},
  volume =       {24},
  pages =        {175-189},
  year =         {2015},
  doi =          {10.1016/j.nonrwa.2015.02.004},
}

@Article{Muller:12,
  author =       {M{\"u}ller, Stefan and Regensburger, Georg},
  title =        {Generalized mass action systems: Complex balancing
                  equilibria and sign vectors of the stoichiometric
                  and kinetic-order subspaces},
  journal =      {SIAM Journal on Applied Mathematics},
  volume =       {72},
  number =       {6},
  pages =        {1926-1947},
  year =         {2012},
  doi =          {10.1137/1108470},
}

@article{VasStad23,
author={Vassena, Nicola and Stadler, Peter F},
  title={Unstable Cores are the source of instability in chemical reaction networks},
  journal={Proceedings of the Royal Society A},
 volume={480},
  number={2285},
  pages={20230694},
  year={2024},
  publisher={The Royal Society},
  doi={10.1098/rspa.2023.0694}
}

@article{MM13,
  title={Die {K}inetik der {I}nvertinwirkung},
  author={Michaelis, L. and Menten, M. L.},
  journal={Biochem. Z.},
  volume={49},
  pages={333-369},
  year={1913},
}

@article{conradi2024distributive,
  title={In distributive phosphorylation catalytic constants enable non-trivial dynamics},
  author={Conradi, Carsten and Mincheva, Maya},
  journal={Journal of Mathematical Biology},
  volume={89},
  number={2},
  pages={20},
  year={2024},
  publisher={Springer}
}

@article{farouki:12bern,
  title={The {B}ernstein polynomial basis: A centennial retrospective},
  author={Farouki, Rida T},
  journal={Computer Aided Geometric Design},
  volume={29},
  number={6},
  pages={379--419},
  year={2012},
  publisher={Elsevier}
}

@article{BernCert:08,
  title={Certificates of positivity in the {B}ernstein basis},
  author={Boudaoud, Fatima and Caruso, Fabrizio and Roy, Marie-Fran{\c{c}}oise},
  journal={Discrete \& Computational Geometry},
  volume={39},
  number={4},
  pages={639--655},
  year={2008},
  publisher={Springer}
}

@article{EliNidhiExc:2024,
  title={Network reduction and absence of {H}opf Bifurcations in dual phosphorylation networks with three Intermediates},
  author={Feliu, Elisenda and Kaihnsa, Nidhi},
  journal={Studies in Applied Mathematics},
  volume={157},
  number={3},
  pages={e70268},
  year={2026},
  publisher={Wiley Online Library}
}

@article{Conradispatial,
  title={Conditions for spatial instabilities and pattern formation from monomial steady state parameterizations},
  author={Conradi, Carsten and Mincheva, Maya and Uecker, Hannes},
  journal={arXiv preprint arXiv:2605.16049},
  year={2026}
}

@book{Fei19,
  title={Foundations of Chemical Reaction Network Theory},
  author={Feinberg, Martin},
  year={2019},
  publisher={Springer},
  doi={10.1007/978-3-030-03858-8}
}

@article{CarstenHopfExclusion19,
  title={On the existence of {H}opf bifurcations in the sequential and distributive double phosphorylation cycle},
  author={Conradi, Carsten and Feliu, Elisenda and Mincheva, Maya},
  journal={Mathematical biosciences and engineering},
  volume={17},
  number={1},
  pages={494--513},
  year={2020},
  doi={10.3934/mbe.2020027
}
}

@inproceedings{Dickenstein2019,
  title={Algebra and geometry in the study of enzymatic cascades},
  author={Dickenstein, Alicia},
  booktitle={World Women in Mathematics 2018: Proceedings of the First World Meeting for Women in Mathematics (WM) $^2$},
  pages={57--81},
  year={2019},
  organization={Springer}
}

@article{Dickenstein20,
  title={Algebraic geometry tools in systems biology},
  author={Dickenstein, Alicia},
  journal={Not. Am. Math. Soc.},
  volume={67},
  number={11},
  pages={1706--1715},
  year={2020}
}

@article{conradietal19,
  title={Emergence of oscillations in a mixed-mechanism phosphorylation system},
  author={Conradi, Carsten and Mincheva, Maya and Shiu, Anne},
  journal={Bulletin of Mathematical Biology},
  volume={81},
  number={6},
  pages={1829--1852},
  year={2019},
  publisher={Springer},
  doi={10.1007/s11538-019-00580-6}
}

@article{banaji:2ndaddcomp,
  title={The determinant of the second additive compound of a square matrix: a formula and applications},
  author={Banaji, Murad},
  journal={arXiv preprint arXiv:1806.07162},
  year={2018}
}

@article{Feliu:2020,
  title =        {The kinetic space of multistationarity in dual
                  phosphorylation},
  author =       {Feliu, Elisenda and Kaihnsa, Nidhi and de Wolff,
                  Timo and Y{\"u}r{\"u}k, O{\u{g}}uzhan},
  journal =      {Journal of Dynamics and Differential Equations},
  volume =       {34},
  pages =        {1-28},
  year =         {2020},
  doi =          {10.1007/s10884-020-09889-6}
}

@article{Fiedler85PhD,
  title={An index for global {H}opf bifurcation in parabolic systems.},
  author={Fiedler, Bernold},
  journal={Journal f{\"u}r die reine und angewandte Mathematik},
  volume={358},
  pages={1--36},
  year={1985}
}

@article{gunawardena:2007,
  title={Distributivity and processivity in multisite phosphorylation can be distinguished through steady-state invariants},
  author={Gunawardena, Jeremy},
  journal={Biophysical journal},
  volume={93},
  number={11},
  pages={3828--3834},
  year={2007},
  publisher={Elsevier}
}

@article{cohen2000regulation,
  title={The regulation of protein function by multisite phosphorylation--a 25 year update},
  author={Cohen, Philip},
  journal={Trends in biochemical sciences},
  volume={25},
  number={12},
  pages={596--601},
  year={2000},
  publisher={Elsevier}
}

@article{WangSontagMonotone:2008,
  author  = {Wang, Liming and Sontag, Eduardo D.},
  title   = {Singularly Perturbed Monotone Systems and an Application
             to Double Phosphorylation Cycles},
  journal = {Journal of Nonlinear Science},
  year    = {2008},
  volume  = {18},
  number  = {5},
  pages   = {527--550},
  doi     = {10.1007/s00332-008-9021-2}
}

@article{HolsteinFlockerziConradi:2013,
  author  = {Holstein, Katharina and Flockerzi, Dietrich
             and Conradi, Carsten},
  title   = {Multistationarity in Sequential Distributed Multisite
             Phosphorylation Networks},
  journal = {Bulletin of Mathematical Biology},
  year    = {2013},
  volume  = {75},
  number  = {11},
  pages   = {2028--2058},
  doi     = {10.1007/s11538-013-9878-6}
}

@article{FlockerziHolsteinConradi:2014,
  author  = {Flockerzi, Dietrich and Holstein, Katharina
             and Conradi, Carsten},
  title   = {{$N$}-Site Phosphorylation Systems with {$2N-1$} Steady States},
  journal = {Bulletin of Mathematical Biology},
  year    = {2014},
  volume  = {76},
  number  = {8},
  pages   = {1892--1916},
  doi     = {10.1007/s11538-014-9984-0}
}

@article{ConradiMincheva:2014,
  author  = {Conradi, Carsten and Mincheva, Maya},
  title   = {Catalytic Constants Enable the Emergence of Bistability
             in Dual Phosphorylation},
  journal = {Journal of the Royal Society Interface},
  year    = {2014},
  volume  = {11},
  number  = {95},
  pages   = {20140158},
  doi     = {10.1098/rsif.2014.0158}
}

@article{ErramiEtAl:2015,
  author  = {Errami, Hassan and Eiswirth, Markus and Grigoriev, Dima
             and Seiler, Werner M. and Sturm, Thomas and Weber, Andreas},
  title   = {Detection of {H}opf Bifurcations in Chemical Reaction
             Networks Using Convex Coordinates},
  journal = {Journal of Computational Physics},
  year    = {2015},
  volume  = {291},
  pages   = {279--302},
  doi     = {10.1016/j.jcp.2015.02.050}
}

@article{ConradiFeliuMinchevaWiuf:2017,
  author  = {Conradi, Carsten and Feliu, Elisenda and Mincheva, Maya
             and Wiuf, Carsten},
  title   = {Identifying Parameter Regions for Multistationarity},
  journal = {PLoS Computational Biology},
  year    = {2017},
  volume  = {13},
  number  = {10},
  pages   = {e1005751},
  doi     = {10.1371/journal.pcbi.1005751}
}

@article{BozemanMorales:2017,
  author  = {Bozeman, Luna and Morales, Adriana},
  title   = {No Oscillations in the {Michaelis--Menten} Approximation
             of the Dual Futile Cycle under a Sequential and
             Distributive Mechanism},
  journal = {SIAM Undergraduate Research Online},
  year    = {2017},
  volume  = {10},
  pages   = {21--28},
  doi     = {10.1137/16S015565}
}

@article{Tung:2018,
  author  = {Tung, Hwai-Ray},
  title   = {Precluding Oscillations in {Michaelis--Menten}
             Approximations of Dual-Site Phosphorylation Systems},
  journal = {Mathematical Biosciences},
  year    = {2018},
  volume  = {306},
  pages   = {56--59},
  doi     = {10.1016/j.mbs.2018.10.008}
}

@article{WangSontag:2008,
  author  = {Wang, Liming and Sontag, Eduardo D.},
  title   = {On the Number of Steady States in a Multiple Futile Cycle},
  journal = {Journal of Mathematical Biology},
  year    = {2008},
  volume  = {57},
  number  = {1},
  pages   = {29--52},
  doi     = {10.1007/s00285-007-0145-z}
}

@article{CaiHimmelmannOstermann:2025,
  author  = {Cai, May and Himmelmann, Matthias and Ostermann, Birte},
  title   = {Empirically Exploring the Space of Monostationarity
             in Dual Phosphorylation},
  journal = {Journal of Mathematical Chemistry},
  year    = {2025},
  volume  = {63},
  number  = {3},
  pages   = {666--692},
  doi     = {10.1007/s10910-024-01687-5}
}

@article{householder:1968,
  title={Bigradients and the problem of {R}outh and {H}urwitz},
  author={Householder, Alston S},
  journal={SIAM Review},
  volume={10},
  number={1},
  pages={56--66},
  year={1968},
  publisher={SIAM}
}

@article{Tisseur:01,
  title={The quadratic eigenvalue problem},
  author={Tisseur, Fran{\c{c}}oise and Meerbergen, Karl},
  journal={SIAM review},
  volume={43},
  number={2},
  pages={235--286},
  year={2001},
  publisher={SIAM}
}

\end{document}